\documentclass[11.5pt]{amsart}
\usepackage{amssymb} 
\textwidth=16cm \oddsidemargin=\evensidemargin
\oddsidemargin=-.5cm \evensidemargin=-.5cm

\vfuzz2pt 
\hfuzz2pt 
\newtheorem{thm}{Theorem}[section]
\newtheorem{cor}[thm]{Corollary}
\newtheorem{lem}[thm]{Lemma}

\newtheorem{defn}[thm]{Definition}
\newtheorem{rem}[thm]{Remark}

\theoremstyle{question}

\numberwithin{equation}{section}

\begin{document}
\title[minimal number of generators]{}%
\begin{center}
{\bf\Large Minimal Number of Generators and Minimum Order of a
Non-Abelian Group whose Elements Commute with Their Endomorphic
Images}
\end{center}
\vspace{1cm}
\begin{center}
{\bf A. Abdollahi$~^*$} \;\;\;  {\bf A. Faghihi} \;\;\; and \;\;\;
{\bf A. Mohammadi Hassanabadi}
\\ Department of Mathematics,\\ University of Isfahan,\\ Isfahan
81746-73441,\\ Iran.
\end{center}
\thanks{$~^*$ Corresponding Author. e-mail: {\tt a.abdollahi@math.ui.ac.ir}}
\subjclass{20D45; 20E36}
 \keywords{2-Engel groups, $p$-groups,
endomorphisms of groups, near-rings.}
\thanks{This work was supported partially by the Center of Excellence for Mathematics, University of Isfahan.}

\begin{abstract} A group in which every element commutes
with its endomorphic images is called an $E$-group.  If $p$ is a
prime number, a $p$-group $G$
 which is an $E$-group is called a $pE$-group.
 Every abelian group is obviously an $E$-group.
  We prove that every 2-generator $E$-group is abelian
 and that all 3-generator $E$-groups are nilpotent of class at
 most 2.  It is also proved that every infinite 3-generator $E$-group is abelian. We conjecture that every finite
 3-generator $E$-group should  be abelian.  Moreover we   show that  the
 minimum order of a non-abelian $pE$-group is $p^8$ for any odd
 prime number $p$ and this order is $2^7$ for $p=2$. Some of these
 results are proved for a class wider than the class of
 $E$-groups.
\end{abstract}
\maketitle
\section{\bf Introduction and results }
A group in which each element commutes with its endomorphic
images is called an ``$E$-group". It is well-known (see e.g.,
\cite{m2}) that a group $G$ is an $E$-group if and only if the
near-ring generated by the endomorphisms of $G$ in the near-ring
of maps on  $G$ is a ring. \\
Since in an $E$-group every element commutes with its image under
inner automorphisms, every $E$-group is a 2-Engel
 group, and so they are nilpotent of class  at most 3 (see \cite{l}, or \cite[Theorem 12.3.6]{r1}).
  Throughout the paper $p$ denotes a prime number. We
 call an $E$-group which is also a $p$-group, a $pE$-group.
 Since a finite $E$-group can be written as a direct product of
 its Sylow subgroups, and any direct factor of an $E$-group is an
 $E$-group \cite{m3}, so we need only consider  $pE$-groups.\\
  The first examples of non-abelian $pE$-groups
 are due to
  R. Faudree \cite{f}, which are defined as follows:
\begin{align*} G=\langle a_1,a_2,a_3,a_4 \;|\; a_i^{p^2}=1, [a_i,a_j,a_k]=1,
\;\; i,j,k\in\{1,2,3,4\} &\\ [a_1,a_2]=a_1^p,  [a_1,a_3]=a_3^p,
[a_1,a_4]=a_4^p, [a_2,a_3]=a_2^p, [a_2,a_4]=1, [a_3,a_4]=a_3^p~
\rangle,
\end{align*} where $p$ is any odd prime number.
 Note that the above example is false for
 $p=2$. This is because, when $p=2$ then the map $\alpha$ defined by
$$a_1^\alpha=a_1^{-1}a_2a_4, \;  a_2^\alpha=a_3, \;
a_3^\alpha=a_4, \; a_4^\alpha=a_1a_4$$ can be extended to an
endomorphism of $G$. But $[a_3^\alpha,a_3]=[a_4,a_3]\neq 1$, so
that $G$ is not an  $E$-group.\\
All known  examples  of non-abelian $E$-groups have nilpotency
class 2 (see \cite{c}, \cite{cf}, \cite{f}, \cite{m1}). In this
paper, we see new examples of $E$-groups.\\
 A. Caranti posed the question \cite[Problem 11.46
 a]{k} of whether there exists a finite $3E$-group  of nilpotency class
 3. (Note that every $E$-group without elements of order 3 is
 nilpotent of class at most 2).\\
Some partial (negative) answers to this question are as follows:
finite $3E$-groups of exponent dividing 9 are nilpotent of class
at most 2 \cite{m3}; every   2-generator $E$-group is nilpotent of
class at most 2 (since they are 2-Engel). Here we concentrate on
the following questions:\\
(1) \; What is the least number of generators of a finitely
generated non-abelian
 $E$-group?\\
 (2) \; What is the minimum order of a finite non-abelian
 $pE$-group?\\
We prove
\begin{thm} \label{thm3gen} Every finite $3$-generator $E$-group is nilpotent of class at most $2$.
\end{thm}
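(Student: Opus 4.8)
The plan is to reduce the statement to the case of a finite $3$-generator $3E$-group and then to exploit endomorphisms whose image lies in $\gamma_2(G)$. Since a finite $E$-group is the direct product of its Sylow subgroups, each of which is again an $E$-group and is generated by at most three elements (the images of a generating set of $G$), and since every $E$-group is $2$-Engel and hence nilpotent of class at most $3$, it is enough to prove $\gamma_3(P)=1$ for each Sylow subgroup $P$. For a prime $q\neq 3$ this is automatic: in a $2$-Engel group $\gamma_3$ has exponent dividing $3$ (Levi), so $\gamma_3$ of a $2$-Engel $q$-group is trivial. Hence we may assume $G=\langle a,b,c\rangle$ is a finite $3E$-group and must show $\gamma_3(G)=1$; suppose not. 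As $G$ is $2$-Engel, $\gamma_4(G)=1$, so $\gamma_3(G)\le Z(G)$, and the Levi identities $[x,y,z]^3=1$, $[x,y,z]=[y,z,x]=[z,x,y]$, $[x,y,z]=[x,z,y]^{-1}$ show $\gamma_3(G)$ is cyclic of order $3$, say $\gamma_3(G)=\langle w\rangle$ with $w=[a,b,c]$. Also no generator can lie in $\Phi(G)$, for otherwise $G$ would be generated by two elements, hence (being $2$-Engel) of class at most $2$, contradicting $w\neq1$; thus $\bar a,\bar b,\bar c$ form a basis of $G/\Phi(G)$.

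The key tool is the following. Because $\gamma_2(G)$ is abelian and $[\gamma_2(G),G]=\gamma_3(G)\le Z(G)$, the commutator map induces a biadditive pairing $\beta\colon G^{\mathrm{ab}}\times\bigl(\gamma_2(G)/\gamma_3(G)\bigr)\to\gamma_3(G)$; by the Levi identities $\beta(\bar a,\overline{[b,c]})=[a,[b,c]]=[b,c,a]^{-1}=w^{-1}$ while $\beta(\bar a,\overline{[a,b]})=\beta(\bar a,\overline{[a,c]})=1$, and similarly after permuting $a,b,c$. Now every homomorphism $\psi\colon G^{\mathrm{ab}}\to\gamma_2(G)$ yields an endomorphism of $G$, namely the composition of the abelianisation map, $\psi$, and the inclusion $\gamma_2(G)\le G$; so the $E$-property gives $[x,\psi(\bar x)]=1$, that is, $\beta(\bar x,\overline{\psi(\bar x)})=1$, for every $x\in G$. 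If one can realise a $\psi$ with $\psi(\bar a)=[b,c]$ and $\psi(\bar b)=\psi(\bar c)=1$, then taking $x=a$ forces $w^{-1}=1$, a contradiction; more generally an endomorphism $a\mapsto a[b,c]^{k}u$, $b\mapsto b$, $c\mapsto c$, with $3\nmid k$ and $u$ a product of $[a,b]^{\pm1},[a,c]^{\pm1}$ and elements of $\gamma_3(G)$, already does the job, since then $E$ forces $[a,a^{\varphi}]=[a,[b,c]^{k}u]=w^{-k}=1$.

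The main obstacle, and the heart of the proof, is that such endomorphisms need not exist: the map $a\mapsto a[b,c]^{k}$ (fixing $b,c$) respects a defining relator $r$ only when $[b,c]^{k\sigma_a(r)}$, with $\sigma_a(r)$ the exponent sum of $a$ in $r$, lies in $\gamma_3(G)$ and cancels the correction term produced by the Hall--Petrescu/Levi collection, while a homomorphism $G^{\mathrm{ab}}\to\gamma_2(G)$ sending $\bar a$ to $[b,c]$ exists only if the order of $[b,c]$ in $\gamma_2(G)$ divides the order of $\bar a$ in $G^{\mathrm{ab}}$. When the orders of the generators in $G^{\mathrm{ab}}$ are too small compared with the orders of the basic commutators in $\gamma_2(G)/\gamma_3(G)$, the only available homomorphisms have image inside $3\cdot\bigl(\gamma_2(G)/\gamma_3(G)\bigr)$, and since $\gamma_3(G)$ has exponent $3$ the pairing then yields only the trivial relation $w^3=1$. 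Disposing of these residual configurations is where the real work lies: one either produces less obvious endomorphisms and checks via the collection formula that they preserve the relators of $G$, or extracts additional relations by expanding $[xy,(xy)^{\varphi}]=1$ and $[xyz,(xyz)^{\varphi}]=1$ in the class-$3$ group $G$ (these give $[x,y^{\varphi}]\equiv[x^{\varphi},y]\pmod{\gamma_3(G)}$ and a ``trilinear'' identity whose $\gamma_3$-component is a controllable power of $w$), or settles the remaining low-exponent cases---essentially exponent dividing $9$, already handled in the literature, together with a few mixed-exponent cases---by a direct structural analysis. It is for this last step that it is convenient to isolate the few commuting identities actually used, which is the wider class referred to in the abstract, and to run the case analysis against those alone. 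I expect this endomorphism-existence analysis, rather than any single computation, to be the crux of the argument.
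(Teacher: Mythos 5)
Your reduction to a finite $3$-generator $3E$-group is sound and matches the paper's first step (Sylow factors are endomorphic images, hence $\le 3$-generator $E$-groups, and $\gamma_3$ has exponent $3$ so only $p=3$ matters), and your pairing set-up $\beta\colon G^{\mathrm{ab}}\times\bigl(G'/\gamma_3(G)\bigr)\to\gamma_3(G)$ together with endomorphisms factoring through $G^{\mathrm{ab}}\to G'$ is a legitimate way to use the $E$-property. But the proof has a genuine gap exactly where you say ``the real work lies'': you never dispose of the case in which no homomorphism $G^{\mathrm{ab}}\to G'$ sends $\bar a$ to $[b,c]$ (and cyclically), and that case is not a residual corner --- it is the generic situation for a putative class-$3$ example. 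Indeed, by the paper's Lemma \ref{2.5}(iii), a class-$3$ $3E$-group (more generally a $3\mathcal{E}$-group) automatically has $\mathrm{exp}(G')=3^{r+1}>3^{r}=\mathrm{exp}(G/G')$, so the order obstruction you identify is forced rather than exceptional, and the only maps available through $G^{\mathrm{ab}}$ tend to land in $(G')^{3}\gamma_3(G)$, where the pairing only returns $w^{3}=1$. Your three suggested escape routes (``less obvious endomorphisms'', extra identities from $[xy,(xy)^{\varphi}]=1$, or ``essentially exponent dividing $9$, already handled in the literature'') are not carried out, and the last claim is unsubstantiated: the problematic configurations occur for every $r$, not just $r=1$.

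What is missing is the structural input the paper actually uses: Malone's theorem that a finite $pE$-group satisfies $\Omega_r(G)\leq Z(G)$ where $p^{r}=\mathrm{exp}(G/G')$ (this is the $p\mathcal{E}$ condition). With that in hand the paper's Theorem \ref{3e} avoids constructing any endomorphism at all: assuming class $3$, one writes $x^{3^{r}},y^{3^{r}},z^{3^{r}}$ modulo $H=(G')^{3}\gamma_3(G)$ as words in $[x,y],[y,z],[z,x]$, uses $[x,y,z]\neq 1$ and the identities $[x^{3^{r}},y]=[x,y^{3^{r}}]$ to pin down the exponents, deduces $[x,y]^{3^{r}},[y,z]^{3^{r}},[z,x]^{3^{r}}\in\gamma_3(G)$ and hence $x^{3^{2r}}=y^{3^{2r}}=z^{3^{2r}}=1$, so by regularity $G^{3^{2r}}=1$; this contradicts $\mathrm{exp}(G)=3^{2r+1}$, which follows from Lemma \ref{2.5} and again rests on $\Omega_r(G)\leq Z(G)$. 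Since your argument nowhere invokes this (or any comparable) consequence of the $E$-property beyond commutation with maps through the abelianization, the endomorphism-existence obstruction is a real barrier, and as written the proposal does not prove the theorem.
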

\begin{thm} \label{thm3^{10}} Let $G$ be a $3E$-group. If $|G|\leq 3^{10}$, then
$G$ is nilpotent of class at most $2$.
\end{thm}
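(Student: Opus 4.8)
The plan is to argue by contradiction. Suppose $G$ is a $3E$-group with $|G|\le 3^{10}$ that is \emph{not} nilpotent of class at most $2$; since every $E$-group is $2$-Engel, $G$ is then nilpotent of class exactly $3$. I would record the standard facts about a $2$-Engel group of class $3$: $\gamma_4(G)=1$, $\gamma_3(G)\le Z(G)$, $\gamma_3(G)^3=1$, and the triple commutator $(x,y,z)\mapsto[x,y,z]$ induces an alternating trilinear map into $\gamma_3(G)$. Two facts quoted in the introduction supply the starting leverage: an $E$-group with no element of order $3$ has class at most $2$, and (by \cite{m3}) a finite $3E$-group of exponent dividing $9$ has class at most $2$. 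Hence $G$ contains an element of order $3$ and an element $t$ with $t^9\ne 1$. I would take $G$ to be of minimal order subject to being a $3E$-group of class $3$.

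The first real step is to bound the number of generators $d(G)$. Theorem~\ref{thm3gen} gives $d(G)\ge 4$ at once, since a $3$-generator $E$-group has class at most $2$. For an upper bound, crude order counting ($|G/G'|\ge 3^{d(G)}$, $|G'|=|\gamma_3(G)|\cdot|\gamma_2(G)/\gamma_3(G)|\ge 3^2$, together with the extra factor coming from $t$) only gives a bound like $d(G)\le 7$, far from enough. To sharpen it I would polarize the defining relation $[g,g^{\alpha}]=1$: substituting $g=xy$ and expanding in the class-$3$, $2$-Engel setting produces, for every endomorphism $\alpha$ (and more generally every element $\alpha$ of the near-ring $E(G)$, which is a ring because $G$ is an $E$-group, so in particular for the power maps $x\mapsto x^{n}$), an identity expressing $[x,y^{\alpha}]\,[y,x^{\alpha}]$ as a product of triple commutators. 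Applied across a generating set, these identities, the structure of the class-$2$ quotient $G/\gamma_3(G)$, and the presence of an element of order $27$, should force $d(G)=4$; along the way the same relations should collapse $\gamma_3(G)$ (which, being spanned by the alternating triple commutators in four generators, has rank at most $4$ a priori) down to order $3$.

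Once $d(G)=4$ and $|\gamma_3(G)|=3$, write $G=\langle a_1,a_2,a_3,a_4\rangle$. Then $|G/G'|=3^4$, $|\gamma_3(G)|=3$, and $|G|\le 3^{10}$ force $|\gamma_2(G)/\gamma_3(G)|\le 3^5$ and bound the exponents of the $a_i$ and of $G'$. This leaves only finitely many candidate presentations — each specified by the matrix of commutators $[a_i,a_j]$ expressed through the $a_k$ and the generator of $\gamma_3(G)$, subject to the $2$-Engel relations — and I would eliminate them one at a time by exhibiting, exactly as the excerpt does to kill Faudree's presentation at the prime $2$, an explicit endomorphism $\alpha$ of the candidate (a map built by permuting and combining the $a_i$, verified to respect the relations) together with an element $g$ for which $[g^{\alpha},g]\ne 1$. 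This last, finite, verification is the natural place to call on a computer algebra system such as \textsf{GAP}.

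The crux is the middle step: descending from ``$2$-Engel of class $3$'' to a \emph{finite} list of presentations — simultaneously bounding $d(G)$ by $4$, bounding $|\gamma_3(G)|$, and bounding the exponents — using only what the $E$-condition and the order bound $3^{10}$ actually give. The added difficulty compared with a textbook minimal-counterexample argument is that neither subgroups nor proper quotients of an $E$-group need be $E$-groups, so the whole reduction must be carried out inside $G$ itself through the polarized commutator identities rather than by passing to quotients; once the list of candidates is finite, refuting each is mechanical.
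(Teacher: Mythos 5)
The step you yourself call ``the crux'' is a genuine gap, not a detail: nothing in the proposal actually forces $d(G)=4$, collapses $\gamma_3(G)$ to order $3$, or bounds the exponents --- you only assert that polarized identities ``should'' do this, and no mechanism is given by which the bound $|G|\leq 3^{10}$ enters that computation. The paper obtains $d(G)=4$ by a completely different and elementary route: a minimal counterexample $G$ is indecomposable, hence $\Omega_1(G)\leq\Phi(G)$ by Remark \ref{PE} (using $\Omega_1(G)\leq Z(G)$, which holds since a finite $3E$-group is a $3\mathcal{E}$-group); regularity gives $|\Omega_1(G)|=|G:G^3|\geq|G:\Phi(G)|$, and since $\mathrm{cl}(G)=3$ forces $\Phi(G)\cap Z(G)\lvertneqq\Phi(G)$, the assumption $d(G)\geq 5$ yields $|G|\geq 3^{11}$; together with Theorem \ref{thm3gen} this pins $d(G)=4$ without any polarization identities.

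Your proposed endgame also diverges from the paper and would not be executable as described: there is no reduction to a finite list of presentations to be refuted by exhibiting endomorphisms in {\sf GAP} (and $4$-generator $2$-Engel $3$-groups of order up to $3^{10}$ are far beyond any practical enumeration; the small-groups library stops well short of $3^8$). Instead the paper derives a purely structural contradiction: $|G':G'\cap Z(G)|=3$ is excluded by Lemma \ref{z2} combined with Lemma \ref{2.9}, so $|G':G'\cap Z(G)|\geq 9$; a factorization of $|G|$ through $\Phi(G)$ then forces $|G|=3^{10}$, $|\Phi(G)\cap Z(G)|=|\Omega_1(G)|=3^4$ and $\Phi(G)=G^3\leq Z_2(G)$ abelian of order $3^6$ with $d(\Phi(G))=4$; Lemma \ref{2.5} gives $\mathrm{exp}(G/G')=3$, so Lemma \ref{Z-2} yields $Z_2(G)^3=\Phi(G)\cap Z(G)$ of order $3^4$, while the possible abelian structures of $Z_2(G)$ (of order $3^6$ or $3^7$, generated by $4$ elements) allow $|Z_2(G)^3|$ to be at most $27$ --- a contradiction. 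So the correct proof never needs $|\gamma_3(G)|=3$, never lists presentations, and uses no computer; to salvage your plan you would have to actually prove the middle step, and even then replace the enumeration by an argument of the paper's structural kind.
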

As we said, it is easily seen that every 2-generator $E$-group is
nilpotent of class at most 2. In fact an stronger result holds,
namely:
\begin{thm}\label{2-gen} {\rm (i)} \; Every $2$-generator group is abelian if
and only if it is an $E$-group.\\
{\rm (ii)} \; Every infinite $3$-generator group is abelian if and
only it is an $E$-group.
\end{thm}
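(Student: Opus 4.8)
Both ``if'' directions are trivial, since every abelian group is an $E$-group. So the content is: (i) every $2$-generator $E$-group $G$ is abelian, and (ii) every infinite $3$-generator $E$-group $G$ is abelian. In both cases $G$ has class at most $2$: for (i) this was recalled in the introduction, and for (ii) it is the assertion (quoted in the abstract) that every $3$-generator $E$-group has class $\le 2$. So throughout I assume $\gamma_3(G)=1$; then $G'\le Z(G)$ and commutators are bi-multiplicative in each variable. The one device used repeatedly is: given $\phi\in\operatorname{End}(G)$ and $y,z\in G$, apply $[w,w^\phi]=1$ to $w=yz$ and expand $[yz,\,y^\phi z^\phi]$ by bi-multiplicativity; the factors $[y,y^\phi]$ and $[z,z^\phi]$ drop out, and one sees that $yz$ fails to commute with its image once $[y,z^\phi][z,y^\phi]\ne 1$. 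In particular, if $\phi$ fixes $y$ and kills $z$ then $[yz,(yz)^\phi]=[yz,y]=[z,y]$, so the $E$-property gives $[y,z]=1$.

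The infinite cases are the easier ones. A finitely generated infinite nilpotent group has infinite abelianization, so (writing $G/G'$ in invariant-factor form and choosing generators accordingly) $G$ has a generating set $y_1,\dots,y_d$ ($d\le 3$) whose images form a basis of $G/G'$ with at least one of infinite order. This free $\mathbb{Z}$-summand gives enough room to lift suitable retractions of $G/G'$ to endomorphisms of $G$: for each pair of generators one obtains an endomorphism that fixes one of them, sends the others to $1$, and kills $G'$. By the device above, each basic commutator $[y_i,y_j]$ is then trivial, so $G'=1$. The only content here is the bookkeeping, from the invariant-factor data, that produces these endomorphisms.

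The substantial case is a finite $2$-generator $E$-group $G$. Since a finite $E$-group is the direct product of its Sylow subgroups, each again $2$-generated, I may take $G$ to be a $p$-group; and a group with cyclic abelianization is abelian, so $G=\langle a,b\rangle$ where $\bar a,\bar b$ form a basis of $G/G'\cong\mathbb{Z}/p^m\times\mathbb{Z}/p^n$ ($m\le n$) and $c:=[a,b]$ generates $G'\cong\mathbb{Z}/p^k$ with $k\le m$; write $a^{p^m}=c^{s}$, $b^{p^n}=c^{t}$. A map $a\mapsto u$, $b\mapsto v$ extends to an endomorphism of $G$ exactly when $u^{p^m}=[u,v]^{s}$ and $v^{p^n}=[u,v]^{t}$; feeding such endomorphisms into the device above (applied to $a$, $b$, $ab$) shows that, if $G$ is an $E$-group, then for every endomorphism the $b$-part of $a^\phi$, the $a$-part of $b^\phi$, and the difference of the $a$-part of $a^\phi$ and the $b$-part of $b^\phi$ all vanish mod $p^k$. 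Now assume $c\ne 1$. If $p^k\mid s$, i.e. $a^{p^m}=1$, then $a\mapsto a$, $b\mapsto 1$ is an endomorphism and the third congruence fails, so $ab$ does not commute with its image; symmetrically if $p^k\mid t$. Otherwise one constructs an endomorphism with $a^\phi\in a\langle b^{p^{\,n-m}}\rangle G'$ and $b^\phi\in b^{\beta}G'$ for which $\beta\not\equiv1\pmod{p^k}$ is forced (solving a congruence $s\beta\equiv s+t\pmod{p^k}$, after at most one preliminary change of generating pair of the shape $a\mapsto ab^{p^{\,n-m}}$ to arrange $v_p(t)\ge v_p(s)$); again the third congruence fails, so some $a^{r}b$ does not commute with its image. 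In every case the $E$-property is contradicted, so $c=1$ and $G$ is abelian. For $p=2$ a short separate calculation is needed because binomial coefficients that were harmless for odd $p$ now contribute.

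The heart of the difficulty is this last step — showing that the structural parameters of a finite $2$-generator $pE$-group can always be put, by an allowed change of generating pair, into a form exhibiting an endomorphism $\phi$ and an element $g$ with $[g,g^\phi]\ne1$. Equivalently, one must rule out by elementary means the possibility that the constraints the $E$-property places on $\operatorname{End}(G)$ are nevertheless compatible with $G'\ne1$. Checking the several parameter-cases, and handling $p=2$, is where the computation lives.
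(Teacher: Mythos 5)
Your overall plan is a genuinely different route from the paper's: for the finite $2$-generator case the paper never computes with a presentation, but quotes Malone's result \cite{m3} that a finite $pE$-group satisfies $\Omega_r(G)\leq Z(G)$ with $p^r=\exp(G/G')$, proves in Theorem \ref{p-epsilon-2-gen} that a $2$-generator group with that property is abelian or $Q_8$, and eliminates $Q_8$ by the swap automorphism of Remark \ref{Q_8}; for (ii) it shows $G=\langle y_1\rangle\times\langle y_2,y_3\rangle$ and reduces to (i) using that direct factors of $E$-groups are $E$-groups. However, as written your argument has two genuine gaps. First, in the infinite $3$-generator case the endomorphism you invoke for the pair of torsion generators --- one that ``fixes one of them, sends the others to $1$, and kills $G'$'' --- need not exist: such a map factors through $G/G'$, and the lift $\bar y_2\mapsto y_2$ is well defined only if the order of $y_2$ in $G$ equals that of $\bar y_2$ in $G/G'$, which is exactly what you cannot assume in a putative non-abelian example (there $y_2$ raised to the order of $\bar y_2$ is typically a nontrivial element of $G'$). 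The free summand only lets you send $\bar y_1$ anywhere; $[y_2,y_3]=1$ then needs an extra step, e.g.\ apply $[g,g^\phi]=1$ with $\phi\colon\bar y_1\mapsto y_2$ to $g=y_1y_3$ after noting $y_1\in Z(G)$, or follow the paper's reduction to the finite $2$-generator direct factor. (Also, the class-$\leq 2$ bound you quote from the abstract is stated in Theorem \ref{thm3gen} only for finite groups; in the infinite case you should derive it, which is easy once $y_1\in Z(G)$.)

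Second, in the finite case the decisive step is only promised, not carried out. Your presentation criterion and the three congruences ($\beta\equiv 0$, $\gamma\equiv 0$, $\alpha\equiv\delta\pmod{p^k}$) are correct, and for odd $p$ the outline does check: with $a^\phi=ab^{p^{n-m}}$, $b^\phi=b^\beta$ the second defining relation is automatic and the first becomes $s\beta\equiv s+t\pmod{p^k}$, solvable when $v_p(t)\geq v_p(s)$ and then forcing a violation of the $E$-property; the symmetric map $b\mapsto ba$, $a\mapsto a^\alpha$ handles $v_p(s)\geq v_p(t)$ and already violates $[b,b^\phi]=1$. But for $p=2$ the binomial corrections are nontrivial precisely when $k=m=n$ (the generalized-quaternion-like corner containing $Q_8$), where these two constructions can fail to exist and new endomorphisms must be produced (for $s,t$ both odd the swap-type map $a\mapsto b^{-st^{-1}}$, $b\mapsto a^{-ts^{-1}}$ works, but the subcases with $s,t$ both even and nonzero modulo $2^k$ still need to be settled). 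Since you explicitly defer this --- ``handling $p=2$ is where the computation lives'' --- and since the paper's own disposal of this corner leans on the $\Omega_r(G)\leq Z(G)$ property that you deliberately avoid, part (i) in the finite case is at present a plausible programme rather than a complete proof.
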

Thus,   by Theorem \ref{2-gen} and Faudree's examples of
$E$-groups, the minimal number of generators of a non-abelian
$E$-group is  $3$ or $4$. We conjecture that this minimal number
must be 4, that is every 3-generator $E$-group is abelian.\\
In  response to question (2), we prove
\begin{thm}\label{thmp^6} For any prime number $p$, every  $pE$-group of order at most  $p^6$ is
abelian.
\end{thm}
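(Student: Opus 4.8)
The plan is to reduce to nilpotency class $2$ and then confront the $E$-group identity with the endomorphisms available in a small $p$-group. Since $E$-groups are $2$-Engel, an $E$-group without elements of order $3$ has class at most $2$; combining this with Theorem \ref{thm3^{10}} (applied when $p=3$, as $3^{6}\le 3^{10}$) shows that every $pE$-group of order at most $p^{6}$ is nilpotent of class at most $2$. So I may assume $G$ is a $pE$-group with $G'\le Z(G)$ and $|G|\le p^{6}$, and, arguing by contradiction, that $G$ is non-abelian; by Theorem \ref{2-gen} this forces $d:=d(G)\ge 3$, where $d(G)$ is the minimal number of generators.

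Because $G'\le Z(G)$, commutators are bilinear, so expanding $[gh,(gh)^{\alpha}]=1$ and cancelling $[g,g^{\alpha}]=[h,h^{\alpha}]=1$ gives $[g^{\alpha},h]=[g,h^{\alpha}]$ for every $\alpha\in\mathrm{End}(G)$; substituting $h=g^{\beta}$ yields $[g^{\alpha},g^{\beta}]=1$, and a further polarization gives $[x^{\alpha},y^{\beta}]=[x^{\beta},y^{\alpha}]$ for all $x,y\in G$ and $\alpha,\beta\in\mathrm{End}(G)$. Next I fix a minimal generating set $g_{1},\dots,g_{d}$ of $G$ whose images realize a direct decomposition $G/G'=\bigoplus_{i}\langle\overline{g_{i}}\rangle$ with $|\overline{g_{i}}|=p^{e_{i}}$ and $e_{1}\ge\cdots\ge e_{d}\ge 1$, and use two families of endomorphisms. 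First, for any $z\in G$ and any homomorphism $\phi\colon G\to\mathbb{Z}/|z|$ the map $x\mapsto z^{\phi(x)}$ is an endomorphism (non-trivial ones exist since $G/G'$ is a non-trivial $p$-group); feeding these, with $\phi$ ranging over the coordinate projections of the decomposition, into the identities above forces, writing $p^{E}=\exp(G)$ and $v_{p}$ for the $p$-adic valuation: (K1) $\exp(G')\le p^{E-e_{1}}=\exp(G)/\exp(G/G')$, hence, since $\exp(G)$ always divides $\exp(G/G')\exp(G')$, in fact $\exp(G)=\exp(G/G')\exp(G')$; and (K2) $|[z,g]|\le p^{\max(v_{p}(|z|)-e_{2},\,0)}$ for all $z,g\in G$, so every element whose order divides $p^{e_{2}}$ is central. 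Secondly, whenever $v_{p}(|g_{i}|)\le e_{1}$ the assignment $g_{1}\mapsto g_{i}$, $g_{j}\mapsto 1\ (j\ne 1)$ extends to an endomorphism $\alpha_{i}$ (it factors through the cyclic quotient $G/\langle g_{2},\dots,g_{d}\rangle^{G}\cong\mathbb{Z}/p^{e_{1}}$), so the $E$-condition gives $[g_{1},g_{i}]=[g_{1},\alpha_{i}(g_{1})]=1$; running this and its analogues over all the generators shows that $G$ is abelian as soon as a generating set as above can be chosen with $\langle g_{i}\rangle\cap G'=1$ for each $i$, i.e. with the extension $1\to G'\to G\to G/G'\to 1$ split.

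Now $|G|=p^{e_{1}+\cdots+e_{d}}|G'|\le p^{6}$ with $\sum e_{i}\ge d\ge 3$ and $|G'|>1$, so $\sum e_{i}\le 5$, $d\le 5$, and only a short list of shapes $(d;e_{1},\dots,e_{d};|G'|)$ remains. Most are excluded directly by (K1) and (K2); and in all but a handful one can arrange a generating set with $\langle g_{i}\rangle\cap G'=1$, so the coordinate-killing endomorphisms apply and $G$ is abelian, a contradiction. The exceptional shapes are exactly those forced to have $\exp(G)>\exp(G/G')$ with $|G'|$ small, where the $G'$-extension need not split (for example $d=3$ with $G/G'\cong\mathbb{Z}/p^{2}\times\mathbb{Z}/p^{2}\times\mathbb{Z}/p$ and $|G'|=p$, or $d=3$ with $G/G'\cong(\mathbb Z/p)^{3}$ and $G'\cong(\mathbb Z/p)^{3}$); for these I choose a generating set adapted to the relations that the endomorphisms $x\mapsto z^{\phi(x)}$ impose on the commutators $[g_i,g_j]$, and exhibit an explicit endomorphism — a transvection $g_{i}\mapsto g_{i}g_{j}^{\,k}$, or a swap $g_{i}\leftrightarrow g_{j}^{\pm1}$ — that respects every defining relation of $G$ yet moves $g_{i}$ to an element not commuting with it, once more contradicting the $E$-property. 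In every case $G$ is abelian.

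The main obstacle is this last step: for the non-split exceptional shapes one must pin down exactly which maps of the generators extend to endomorphisms — equivalently, control the module of relations satisfied by the commutators $[g_{i},g_{j}]$ inside $G'$ — and verify that a relation-respecting, generator-mixing endomorphism always exists; managing the resulting finite but delicate case analysis, rather than any single conceptual point, is the real work, and is why the bound obtained here is $p^{6}$ and not the conjectural $p^{8}$.
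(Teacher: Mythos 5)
Your preparatory steps are sound: the reduction to class at most $2$ (via the $2$-Engel property and Theorem~\ref{thm3^{10}} for $p=3$), the polarization identities $[g^{\alpha},h]=[g,h^{\alpha}]$ and $[x^{\alpha},y^{\beta}]=[x^{\beta},y^{\alpha}]$, the cyclic-image endomorphisms $x\mapsto z^{\phi(x)}$, and the observation that if the generators can be chosen with $|g_{i}|=p^{e_{i}}$ (the split case) then the coordinate endomorphisms force $G$ to be abelian. But the argument stops exactly at the step you yourself call ``the real work,'' and that step is a genuine gap, not a routine verification. The decisive non-split shape is $d(G)=3$ with $G/G'\cong(\mathbb{Z}/p)^{3}\cong G'$, which is precisely the structure any non-abelian $3$-generator candidate of order $p^{6}$ must have (this is the case $r=t=1$ of Theorem~\ref{3e-gen}). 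For it you assert, without any construction or verification, that a relation-respecting ``transvection or swap'' endomorphism always exists and violates the $E$-property, for every admissible matrix of structure constants $[t_{ij}]\in GL(3,\mathbb{Z}_{p})$. That assertion is essentially the content of a nontrivial theorem: the paper handles this case by showing all automorphisms of such a $G$ would be central, hence $Aut(G)$ abelian, and then invoking Morigi's result (Theorem~\ref{autabelian}) that no finite non-abelian $3$-generator $p$-group, $p$ odd, has abelian automorphism group. In other words, the existence of the generator-mixing map you promise to ``exhibit'' is exactly what cannot be done by hand-waving, and for $p=2$ the paper could not argue uniformly at all, resorting to a machine check of the ten candidate $2\mathcal{E}$-groups of order $2^{6}$.

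Two further points sharpen this. First, non-abelian $pE$-groups of order $p^{8}$ (Faudree) and of order $2^{7}$ do exist, so the relation-respecting, non-commuting endomorphisms you posit cannot exist in general; a correct proof must therefore use the bound $p^{6}$ in some precise structural way, and nothing in your sketch identifies where the bound enters the exceptional cases or why the construction fails at $p^{8}$. Second, a smaller inaccuracy: (K2) as stated does not follow from the coordinate projections alone — they give $[g_{i},z]=1$ only when $v_{p}(|z|)\le e_{i}$, so centrality of all elements of order dividing $p^{e_{2}}$ needs an extra argument (the statement itself is true for $pE$-groups, since $\Omega_{r}(G)\le Z(G)$ with $p^{r}=\exp(G/G')$, but that is a quoted property of $E$-groups rather than something your displayed endomorphisms deliver). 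As it stands, the proposal is a plausible plan whose decisive case analysis — the only part that distinguishes $p^{6}$ from $p^{8}$ — is missing.
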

\begin{thm}\label{thmp^7} {\rm (i)} \; For any odd prime number $p$, every  $pE$-group of order at most  $p^7$ is
abelian.\\
{\rm (ii)} \; There exist non-abelian $2E$-groups of order $2^7$.
\end{thm}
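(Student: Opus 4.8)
The statement has two parts calling for quite different methods. For part (i) the first step is to reduce to class $2$. If $p\neq 3$ then a $pE$-group has no element of order $3$, hence is nilpotent of class at most $2$ as recalled in the introduction; if $p=3$ then Theorem~\ref{thm3^{10}} applies since $3^{7}\le 3^{10}$. Together with Theorem~\ref{thmp^6}, which handles all orders up to $p^{6}$, it remains to show that a non-abelian $pE$-group of class exactly $2$ and order $p^{7}$ (with $p$ odd) cannot exist. For such a $G$ the commutator gives a surjective alternating bilinear map $c\colon(G/Z(G))\times(G/Z(G))\to G'$ with $G'\le Z(G)$, and $d(G)\ge 3$ by Theorem~\ref{2-gen}(i). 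The key reformulation is that, for $p$ odd, a class-$2$ group is an $E$-group exactly when every endomorphism $\phi$ is self-adjoint for $c$, i.e. $[x,y^{\phi}]=[x^{\phi},y]$ for all $x,y$: expanding $[xy,(xy)^{\phi}]=1$ by bilinearity of the commutator yields this identity, and conversely self-adjointness forces $[g,g^{\phi}]^{2}=1$, hence $[g,g^{\phi}]=1$ in a $p$-group with $p$ odd. So it suffices to produce, in each admissible structural case, one endomorphism that fails to be self-adjoint.

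One then stratifies by $\exp(G)$ and by the isomorphism type of $G/Z(G)$. When $\exp(G)=p$ we have $\Phi(G)=G'$, so $d(G)\in\{4,5,6\}$ ($d(G)=3$ would force $|G'|\le p^{\binom{3}{2}}=p^{3}$ and thus $|G|\le p^{6}$, and $d(G)=7$ is abelian). For $d(G)\in\{5,6\}$ the form $c$ is degenerate enough that a violating endomorphism can be written down directly: choose $f\in\mathrm{End}(G/Z(G))$ with image inside a subspace annihilated by $c$, check that $f$ lifts to an endomorphism $\phi$ of $G$, and exhibit $v$ with $c(v,f(v))\neq 1$. The decisive case is $d(G)=4$, where $|G'|=p^{3}$ and $G$ is encoded by a surjective alternating map $\wedge^{2}\mathbb{F}_{p}^{4}\to\mathbb{F}_{p}^{3}$, equivalently by the $3$-dimensional subspace $K=\ker c$ (a net of alternating forms in four variables). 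Here one classifies the $GL_{4}(\mathbb{F}_{p})$-orbits of such $K$ and, for each orbit, produces $f$ with $(f\wedge f)K\subseteq K$ — so that $f$ lifts to $\phi\in\mathrm{End}(G)$ — but with $v\wedge f(v)\notin K$ for some $v$. The same scheme, now tracking the $p$-th power map as well, disposes of the remaining cases $\exp(G)\ge p^{2}$. This linear-algebra classification is where essentially all the difficulty of part (i) lies; pushed one power of $p$ further it is the computation that isolates Faudree's group.

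Part (ii) is settled by exhibiting a witness. We give a power-commutator presentation of a group $G$ of order $2^{7}$ and class $2$ on four generators, and the point is that the odd-$p$ argument above breaks down at $p=2$: over $\mathbb{F}_{2}$ there is a shape of the data $\bigl(G/Z(G),G',c,\text{squaring}\bigr)$ for which every liftable endomorphism turns out to be self-adjoint, so no contradiction is available, and moreover, since over $\mathbb{F}_{2}$ the relation $[g,g^{\phi}]^{2}=1$ does not imply $[g,g^{\phi}]=1$, the vanishing of the diagonal must be checked separately. Our group is built so that both hold; in particular the endomorphism $\alpha$ that prevents Faudree's construction from being an $E$-group at $p=2$ (displayed in the introduction) simply does not exist for it. We then verify the $E$-property directly: an endomorphism of $G$ is determined by the images of the four generators subject to the defining relations, so the claim reduces to a finite check that, for all generators $a_{i},a_{j}$ and all admissible $\phi$, one has $[a_{i},a_{i}^{\phi}]=1$ and $[a_{i},a_{j}^{\phi}]=[a_{i}^{\phi},a_{j}]$ (whence $[x,x^{\phi}]=1$ for all $x$ by bilinearity). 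The obstacle in part (ii) is one of design rather than of concept: the presentation must be rigid enough to force self-adjointness and vanishing of the diagonal for every endomorphism, yet loose enough to keep $G$ non-abelian of order $2^{7}$; once such a group is identified, confirming that it is an $E$-group is a bounded and, if one wishes, machine-verifiable computation.
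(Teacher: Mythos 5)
Your proposal does not establish the theorem; both halves have genuine gaps. For part (i), the reduction to class $2$ and the reformulation of the $E$-condition as self-adjointness of endomorphisms with respect to the commutator map are fine, but everything after that is a program rather than a proof: the classification of the $GL_4(\mathbb{F}_p)$-orbits of kernels $K\le\wedge^2\mathbb{F}_p^4$, the production of a non-self-adjoint liftable endomorphism in each orbit, and the entire treatment of the cases $\exp(G)\ge p^2$ (where liftability also requires compatibility with the $p$-power map) are asserted, not carried out. One step is even self-contradictory as written: if $f$ has image inside a subspace annihilated by $c$, then $c(v,f(v))=1$ for every $v$, so no violating $v$ can exist; presumably you mean something else, but it is not said. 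More importantly, you never invoke the key structural fact (Malone \cite{m3}, built into the definition of $p\mathcal{E}$-groups) that a finite $pE$-group satisfies $\Omega_r(G)\le Z(G)$ where $p^r=\exp(G/G')$. That fact is what makes the paper's argument short: it kills the exponent-$p$ case instantly ($r=1$ would give $G=\Omega_1(G)\le Z(G)$), it forces $d(G)\ge 4$ to give $|G|\ge p^8$ via $\Omega_1(G)\le\Phi(G)$ and regularity (Theorem \ref{p^7-ep}), and for $d(G)=3$ the structure theorem (Theorem \ref{3e-gen}) shows a non-abelian $3$-generator $p\mathcal{E}$-group has order $p^{3(r+t)}$, which can never equal $p^7$; a minimal counterexample then contradicts Theorem \ref{thmp^6}. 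Without that ingredient your stratification would have to dispose of essentially all class-$2$ groups of order $p^7$, a task of a completely different magnitude, and it is not done here.

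For part (ii) the gap is even more basic: the statement is an existence claim, and you exhibit no group. Saying that a suitable power-commutator presentation "must be rigid enough to force self-adjointness" and that verification "is a bounded, machine-verifiable computation" does not produce a witness. The paper writes down an explicit group, e.g.
$G=\langle x,y,z,t \mid y^2=z^2,\ [x,y]=[x,t]=y^2,\ [x,z]=z^2t^2,\ [y,z]=x^2,\ [y,t]=[z,t]=[y,z,t]=1\rangle$
of order $2^7$ with $G'=Z(G)=G^2=\Omega_1(G)$, checks (via the {\tt AutPGrp} package) that $Aut(G)$ is abelian, so all automorphisms are central and hence $[a,a^\beta]=1$ for automorphisms $\beta$, and then proves by a short hand argument that every endomorphism that is not an automorphism maps $G$ into $Z(G)$, whence $G$ is an $E$-group. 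Note also that your plan for (ii) only checks self-adjointness and vanishing of $[a_i,a_i^\phi]$ on generators; over $\mathbb{F}_2$ bilinearity gives $[xy,(xy)^\phi]=[x,y^\phi][y,x^\phi]$ only up to the diagonal terms, so the deduction "whence $[x,x^\phi]=1$ for all $x$" needs the same care you correctly flagged earlier in your own write-up, and in any case it cannot begin until a concrete $G$ is on the table.
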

As a result of  Theorems \ref{thmp^6} and \ref{thmp^7} and
Faudree's examples of $E$-groups,  we conclude that the minimum
order of a finite non-abelian $pE$-group is $p^8$, for any odd
prime number $p$ and this order is $2^7$ for $p=2$.
\section{\bf Preliminary definitions and results}
 Let $G$ be a group, $H$ be an element or a subgroup of $G$ and $n$ be a positive integer. We denote by $G'$, $\Phi(G),
 G^n$, $\gamma_3(G)$,
 $Z(G)$, $Z_2(G)$, $\text{exp}(G)$, $Aut(G)$,
 $End(G)$, $C_G(H)$, $Q_8$, and $C_n$, respectively the derived subgroup, the Frattini subgroup, the
 subgroup generated by $n$-powers of the elements,
 the third term of the lower
 central series, the center, the second center, the exponent, the automorphism group, the set of
 endomorphisms of $G$, the centralizer of $H$ in $G$, the quaternion group of order 8, and the cyclic group of order $n$.
For a finite $p$-group $G$ and a positive integer $n$, we denote
by $\Omega_n(G)$ the subgroup generated by elements $x$ such that
$x^{p^n}=1$. If $G$ is a
 nilpotent group, we denote by $\text{cl}(G)$ the nilpotency class of $G$. If $G$ is a finite
 group, $d(G)$ will denote the minimum number of generators of
 $G$. If $a, b$ and $c$ are elements of a group, we
denote by $[a,b]$ the commutator $a^{-1}b^{-1}ab$ and we define
$[a,b,c]=[[a,b],c]$, as usual. An automorphism $\alpha$ of a group $G$ is called central if $x^{-1}x^{\alpha} \in Z(G)$ for all $x\in G$.\\
As we have found that some of our results are valid for a class
of finite $p$-groups larger  than $pE$-groups, we introduce the
following class of $p$-groups for every prime number $p$:
\begin{defn}
A finite $p$-group $G$ is called a $p\mathcal{E}$-group if $G$ is
a $2$-Engel group and there exists a non-negative integer $r$ such
that $\Omega_r(G)\leq Z(G)$ and {\rm
$\text{exp}(\frac{G}{G'})=p^r$}.
\end{defn}
\begin{rem}\label{Q_8} {\rm We know that a finite $pE$-group is a $p\mathcal{E}$-group
\cite{m3}; but the converse is false in general as one can see
that $Q_8$ and the group
\begin{align*}G=\langle a_1,a_2,a_3: [a_i,a_j,a_k]=1, (i,j,k\in\{1,2,3\}),
a_1^4=1,
 a_1^2=a_2^2=a_3^2=[a_1,a_2], [a_2,a_3]^2=[a_1,a_3]^2=1 \rangle,
 \end{align*}
are $2\mathcal{E}$-groups which are not  $2E$-groups. The
quaternion group $$Q_8=\langle a,b \;|\;
a^b=b^{-1},a^2=b^2,a^4=1\rangle$$ is not an $E$-group, since
$[a,b]\not=1$ and the map $\alpha$ which sends $a$ to $b$ and $b$
to $a$, can be extended to an automorphism  of $Q_8$.}
\end{rem}
\begin{lem}\label{2.4}
If $G$ is a finite $2$-Engel $p$-group, $p>2$ and
$m\in\mathbb{N}$, then $G^{p^{m}}=\{g^{p^{m}}:g\in G\}$
 and $G$ is regular.
\end{lem}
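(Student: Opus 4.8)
The plan is to reduce both assertions to the structure of the $2$-generator subgroups of $G$. I would first recall the key structural fact: every $2$-generator $2$-Engel group is nilpotent of class at most $2$. Indeed, a $2$-Engel group has class at most $3$, so for $x,y\in G$ the subgroup $\gamma_{3}(\langle x,y\rangle)=[\langle x,y\rangle',\langle x,y\rangle]$ is central in $\langle x,y\rangle$ and is generated by $[x,y,x]$ and $[x,y,y]$; the latter is trivial by the Engel law, while linearizing $[a,b,b]=1$ (legitimate since the class is $\le 3$, so weight-$4$ commutators vanish) gives $[a,b,c]=[a,c,b]^{-1}$, whence $[x,y,x]=[x,x,y]^{-1}=1$ as well. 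Thus $\gamma_{3}(\langle x,y\rangle)=1$ — this is exactly the observation already used in the Introduction.

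For regularity I would fix $x,y\in G$ and set $H=\langle x,y\rangle$, which has class at most $2$; then $[y,x]\in Z(H)$ and the class-$2$ power formula gives $(xy)^{p^{m}}=x^{p^{m}}\,y^{p^{m}}\,[y,x]^{\binom{p^{m}}{2}}$ for all $m\ge 0$. Since $p$ is odd, $p^{m}$ is odd, so $\binom{p^{m}}{2}=p^{m}\cdot\frac{p^{m}-1}{2}$ is a multiple of $p^{m}$; writing $c=[y,x]^{(p^{m}-1)/2}\in H'$ we get $(xy)^{p^{m}}=x^{p^{m}}y^{p^{m}}c^{p^{m}}$. Taking $m=1$, this is precisely the defining identity of a regular $p$-group (it is checked inside the $2$-generator subgroup $\langle x,y\rangle$), so $G$ is regular.

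To obtain $G^{p^{m}}=\{g^{p^{m}}:g\in G\}$ I would take $a,b\in G$, set $c=(ab)\,[b,a]^{-(p^{m}-1)/2}$, and compute inside the class-$2$ group $\langle a,b\rangle$, using that $[b,a]$ is central there: $c^{p^{m}}=(ab)^{p^{m}}\,[b,a]^{-\binom{p^{m}}{2}}=a^{p^{m}}b^{p^{m}}$. Hence $\{g^{p^{m}}:g\in G\}$ is closed under multiplication; since it contains $1$ and is closed under inversion, it is a subgroup, and being the set of all $p^{m}$-th powers it coincides with $G^{p^{m}}$. (Alternatively, once regularity is in hand this is the standard fact $G^{p^{m}}=\{g^{p^{m}}:g\in G\}$ valid in every regular $p$-group.)

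The step I expect to be the main obstacle is the reduction in the first paragraph — that $2$-generator $2$-Engel groups have class at most $2$; the naive shortcut ``$\gamma_{3}(G)$ is a $3$-group, hence trivial'' fails when $p=3$, so the $2$-generator argument is genuinely needed there. The hypothesis $p\neq 2$ enters in exactly one essential place, the divisibility $p^{m}\mid\binom{p^{m}}{2}$; for $p=2$ the term $[y,x]^{\binom{p^{m}}{2}}$ need not be a $p^{m}$-th power, and indeed $Q_{8}$ is a $2$-Engel $2$-group that is not regular. Everything else is routine class-$2$ computation.
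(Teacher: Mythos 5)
Your proof is correct and follows essentially the same route as the paper: the paper's one-line argument is exactly the identity $a^{p^m}b^{p^m}=(ab)^{p^m}[a,b]^{p^m(p^m-1)/2}=\bigl(ab[a,b]^{(p^m-1)/2}\bigr)^{p^m}$, valid because $\langle a,b\rangle$ has class at most $2$ and $p$ is odd, which is the same computation you carry out (you merely make explicit the class-$2$ reduction for $2$-generator $2$-Engel subgroups and the verification of the regularity condition, which the paper leaves implicit).
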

\begin{proof}
This follows easily from the following fact which is valid for all
$a,b\in G$:
$$a^{p^{m}}b^{p^{m}}=(ab)^{p^{m}}[a,b]^{\frac{p^{m}(p^m-1)}{2}}=\big(ab[a,b]^{\frac{p^m-1}{2}}\big)^{p^{m}}.$$
\end{proof}
\begin{lem}\label{2.5}
Let $G$ be a  $p\mathcal{E}$-group such that
{\rm $\text{exp}(\frac{G}{G'})=p^{r}$}. \\
{\rm (i)}\; {\rm $\text{exp}(G')=\text{exp}(G/Z(G))$} and {\rm $\text{exp}(G)=p^r\text{exp}(G')$}.\\
{\rm (ii)}\; if {\rm $\text{cl}(G)=2$}, then {\rm $\text{exp}(G')\leq p^r$}.\\
{\rm (iii)}\; if {\rm $\text{cl}(G)=3$}, then $p=3$ and {\rm
$\text{exp}(G')=3^{r+1}$}.
 \end{lem}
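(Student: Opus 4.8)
The plan is to reduce all three parts to two standard facts about a $2$-Engel group $G$ (see, e.g., \cite{r1}): that $G$ is nilpotent of class at most $3$, and that $[x,y,x]=1$ and $\gamma_3(G)^3=1$ for all $x,y\in G$. The first fact gives $[G',G']\le\gamma_4(G)=1$, so $G'$ is abelian; and the identity $[x,y,x]=1$ together with the collection formula $[x^n,y]=[x,y]^n[x,y,x]^{\binom n2}$ (valid in a class-$3$ group) yields $[x^n,y]=[x,y]^n$ for all $x,y\in G$ and all $n$. I will use the two defining properties of a $p\mathcal{E}$-group throughout: $\text{exp}(G/G')=p^r$, so $g^{p^r}\in G'$ for every $g\in G$; and $\Omega_r(G)\le Z(G)$, so that any $x\notin Z(G)$ satisfies $x^{p^r}\ne 1$ (since $\Omega_r(G)$ contains every element of order dividing $p^r$). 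We may assume $G$ is non-abelian, the abelian case being trivial.

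For (i), put $p^s=\text{exp}(G')$. Since $[g,h]\in G'$ we have $[g^{p^s},h]=[g,h]^{p^s}=1$ for all $g,h$, so $g^{p^s}\in Z(G)$ and $\text{exp}(G/Z(G))\mid p^s$. Conversely, if $p^t=\text{exp}(G/Z(G))$ then $[g,h]^{p^t}=[g^{p^t},h]=1$ for all $g,h$; as $G'$ is abelian and generated by commutators, $\text{exp}(G')\mid p^t$. Hence $\text{exp}(G')=\text{exp}(G/Z(G))$. For the other equality, $g^{p^{r+s}}=(g^{p^r})^{p^s}=1$ because $g^{p^r}\in G'$, so $\text{exp}(G)\mid p^{r+s}$; and choosing $g$ with $gZ(G)$ of order $p^s$ in $G/Z(G)$ gives $g^{p^{s-1}}\notin Z(G)$, hence $g^{p^{r+s-1}}=(g^{p^{s-1}})^{p^r}\ne 1$, so $\text{exp}(G)\ge p^{r+s}$.

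For (ii), $\text{cl}(G)=2$ gives $G'\le Z(G)$, so $[g,h]^{p^r}=[g^{p^r},h]=1$ because $g^{p^r}\in G'\le Z(G)$; as $G'$ is abelian and generated by commutators, $\text{exp}(G')\le p^r$. For (iii), $\text{cl}(G)=3$ forces $\gamma_3(G)\ne 1$, a non-trivial finite $p$-group of exponent dividing $3$, so $p=3$. Moreover $[g,h]^{3^r}=[g^{3^r},h]\in[G',G]=\gamma_3(G)$ for all $g,h$ (as $g^{3^r}\in G'$), so $(G')^{3^r}\le\gamma_3(G)$ ($G'$ being abelian), whence $(G')^{3^{r+1}}\le\gamma_3(G)^3=1$ and $\text{exp}(G')\le 3^{r+1}$. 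Finally, if $\text{exp}(G')\le 3^r$ then every element of $G'$ has order dividing $3^r$, so $G'\le\Omega_r(G)\le Z(G)$, contradicting $\text{cl}(G)=3$; hence $\text{exp}(G')>3^r$, and with the upper bound $\text{exp}(G')=3^{r+1}$.

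None of the steps is computationally heavy; the point I would be most careful about is the use of the hypothesis $\Omega_r(G)\le Z(G)$ to extract the lower bounds in (i) and (iii) — this hypothesis is exactly what prevents a noncentral element (respectively, the subgroup $G'$ when it is not central) from dying on raising to the $p^r$-th power — together with the verification that the collection formula collapses to $[x^n,y]=[x,y]^n$ in a $2$-Engel group, on which every displayed identity relies.
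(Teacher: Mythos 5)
Your proof is correct and follows essentially the same route as the paper: both arguments rest on the 2-Engel facts $[x^n,y]=[x,y]^n$ and $\gamma_3(G)^3=1$, with the hypothesis $\Omega_r(G)\le Z(G)$ supplying exactly the lower bounds in (i) and (iii). The only differences are cosmetic --- you prove (ii) directly rather than quoting (i), and you get the lower bound on $\text{exp}(G)$ from a single element whose image in $G/Z(G)$ has maximal order, where the paper argues with the subgroup $G^{p^{r+t-1}}$.
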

\begin{proof}
{\rm (i)}\; We have
\begin{align*}
  \text{exp}(G/Z(G)) \;\;\text{divides}\;\; n & \Leftrightarrow [a^n,b]=1\;\; \forall \; a,b\in G\\
   &  \Leftrightarrow [a,b]^n=1\;\; \forall \; a,b\in G\\
     &  \Leftrightarrow \text{exp}(G') \;\text{divides}\;\; n.
\end{align*}
This shows that $\text{exp}(G')=\text{exp}(G/Z(G))$.  For the
second part of (i), note that if $\text{exp}(G')=p^t$, then
$G^{p^{r+t}}\leq (G^{p^r})^{p^t}\leq(G')^{p^t}=1$. Also if
$G^{p^{r+t-1}}=1$, then $G^{p^{t-1}}\leq \Omega_r(G)\leq Z(G)$
and so $\text{exp}(G')\leq
p^{t-1}$, which is impossible. It follows that $\text{exp}(G)=p^{r+t}$.\\
(ii)\; This follows from (i) and the fact that $G'\leq Z(G)$.\\
(iii)\;  Note that in every 2-Engel group $K$, $\gamma_3(K)^3=1$
(see \cite[Theorem 12.3.6]{r1}). Thus, since $\text{cl}(G)=3$, we
have $p=3$ and one can write
$$(G')^{3^{r+1}}=[G^{3^r},G]^3\leq
[G',G]^3=\gamma_3(G)^3=1$$ which gives $\text{exp}(G')\leq
3^{r+1}$. If $\text{exp}(G')\leq 3^{r}$, then $G'\leq
\Omega_r(G)\leq Z(G)$ which is not possible, as $\text{cl}(G)=3$.
Therefore
 $\text{exp}(G')=3^{r+1}$.
\end{proof}
\begin{thm}\label{p-epsilon-2-gen}
Every  $2$-generator $p\mathcal{E}$-group is either abelian or
isomorphic to $Q_8$.
\end{thm}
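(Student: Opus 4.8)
\textit{Proof proposal.}

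Assume $G=\langle a,b\rangle$ is non-abelian, since otherwise there is nothing to prove. As $G$ is a $2$-generated $2$-Engel group it has nilpotency class at most $2$ (a short computation with the $2$-Engel commutator identities), and hence $\mathrm{cl}(G)=2$; thus $G'=\langle c\rangle$ is cyclic with $c=[a,b]$, say of order $p^{s}$ where $s\geq 1$, and $\exp(G/G')=p^{r}$ with $r\geq 1$. By Lemma~\ref{2.5}(ii) we have $s\leq r$. Every element of $G$ has the form $a^{i}b^{j}c^{k}$, and from $[a^{i}b^{j}c^{k},a]=[b,a]^{j}$ and $[a^{i}b^{j}c^{k},b]=[a,b]^{i}$ we read off that such an element lies in $Z(G)$ precisely when $p^{s}\mid i$ and $p^{s}\mid j$. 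The only consequence of $\Omega_{r}(G)\leq Z(G)$ I shall need is this: if $x\notin Z(G)$ then $x^{p^{r}}\neq 1$ (otherwise $x\in\Omega_{r}(G)$), so $\mathrm{ord}(x)\geq p^{r+1}$.

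The heart of the proof is the following claim: for every $g\in\{a\}\cup\{a^{i}b:i\in\mathbb{Z}\}$, the element $g^{p^{r}}$ generates $G'$. Each such $g$ is non-central ($[a,b]\neq 1$; $[a^{i}b,a]=[b,a]\neq 1$), so $g^{p^{r}}\neq 1$; also $g^{p^{r}}\in G'$ since $\exp(G/G')=p^{r}$. Write $g^{p^{r}}=c^{p^{\alpha}u}$ with $p\nmid u$ and $0\leq\alpha\leq s-1$; then $g^{p^{r}}$ has order $p^{s-\alpha}$, so $\mathrm{ord}(g)=p^{r+s-\alpha}$, and $g^{p^{s-1}}\neq 1$ because $\alpha\leq s-1\leq r-1$. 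But $g^{p^{s-1}}$ is also non-central ($[a^{p^{s-1}},b]=c^{p^{s-1}}\neq 1$, respectively $[(a^{i}b)^{p^{s-1}},a]=c^{-p^{s-1}}\neq 1$), so $\mathrm{ord}(g^{p^{s-1}})\geq p^{r+1}$; since $\mathrm{ord}(g^{p^{s-1}})=\mathrm{ord}(g)/p^{s-1}=p^{r-\alpha+1}$, this forces $\alpha=0$, proving the claim.

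Applying the claim to $g=a$ and $g=b$ gives $a^{p^{r}}=c^{m_{1}}$ and $b^{p^{r}}=c^{m_{2}}$ with $p\nmid m_{1},m_{2}$. Using the class-$2$ identity $(xy)^{n}=x^{n}y^{n}[y,x]^{\binom{n}{2}}$ with $x=a^{i}$, $y=b$, $n=p^{r}$, we obtain $(a^{i}b)^{p^{r}}=a^{ip^{r}}b^{p^{r}}[b,a]^{i\binom{p^{r}}{2}}=c^{\,im_{1}+m_{2}-i\binom{p^{r}}{2}}$, and by the claim this exponent must be prime to $p$ for every integer $i$. That is impossible unless $p\mid\bigl(m_{1}-\binom{p^{r}}{2}\bigr)$; combined with $p\nmid m_{1}$ this yields $p\nmid\binom{p^{r}}{2}$. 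For odd $p$, however, $\binom{p^{r}}{2}=p^{r}\cdot\tfrac{p^{r}-1}{2}$ is divisible by $p$ (recall $r\geq 1$), a contradiction — so for odd $p$ there is no non-abelian $2$-generator $p\mathcal{E}$-group. For $p=2$, $\binom{2^{r}}{2}=2^{r-1}(2^{r}-1)$ is odd only when $r=1$, so $r=1$.

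Finally let $p=2$ and $r=1$. Then $\exp(G')\leq 2$ by Lemma~\ref{2.5}(ii), so $|G'|=2$; and since $G$ is non-abelian, $G/G'$ is non-cyclic, hence $G/G'\cong C_{2}\times C_{2}$ and $|G|=8$. By the claim (now with $r=s=1$), $a^{2}$, $b^{2}$ and $(ab)^{2}$ all equal the unique involution $c$ of $G'$; in particular $a^{4}=1$ and $[a,b]=a^{2}$, whence $b^{-1}ab=a^{-1}$. These are the defining relations of $Q_{8}$, so $G\cong Q_{8}$. I expect the claim that $g^{p^{r}}$ generates $G'$ to be the delicate point: one must keep the order computations honest — this is exactly where $s\leq r$ (Lemma~\ref{2.5}(ii)) enters — and one must run the argument over the whole family $a^{i}b$, not merely over $a$, $b$ and $ab$, in order to dispose of the case of odd $p$.
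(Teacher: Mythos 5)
Your proof is correct, and it takes a recognizably different route from the paper's, although both turn on the same two ingredients: the class-$2$ power identity with exponent $n(n-1)/2$ and the hypothesis $\Omega_r(G)\leq Z(G)$. The paper exploits the cyclicity of $G'$ at the very start: with $n=p^r$ it writes (without loss of generality) $b^{n}=a^{ns}$, computes $(ba^{-s})^{n}=[b,a]^{sn(n-1)/2}$, and notes that this is trivial for odd $p$ (and in most subcases for $p=2$), so that $ba^{-s}\in\Omega_r(G)\leq Z(G)$ and $G=\langle a,ba^{-s}\rangle$ is abelian; the residual case $p=2$, $s$ odd, $\mathrm{exp}(G')=2^{r}$ is squeezed to $[b,a]^{2}=1$, hence $|G|\leq 8$, and $D_8$ is then excluded because it has non-central involutions. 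You never change the generating pair; instead you prove the stronger structural claim that $g^{p^{r}}$ generates $G'$ for every $g$ in the family $\{a\}\cup\{a^{i}b\}$ (order bookkeeping that correctly uses $s\leq r$ from Lemma \ref{2.5}(ii) together with the fact that non-central elements cannot lie in $\Omega_r(G)$), and then let $i$ vary in $(a^{i}b)^{p^{r}}=c^{\,im_{1}+m_{2}-ip^{r}(p^{r}-1)/2}$ to force $p^{r}(p^{r}-1)/2$ to be prime to $p$, which is impossible for odd $p$ and pins $r=1$ when $p=2$; your endgame then reads off the $Q_8$ relations $a^4=1$, $b^2=a^2=[a,b]$ directly, and the relation $b^{2}=[a,b]\neq 1$ automatically rules out $D_8$ rather than eliminating it by inspection. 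The paper's substitution $b\mapsto ba^{-s}$ buys a shorter argument; your ``every $p^{r}$-th power of a non-central generator-type element generates $G'$'' claim costs more computation but yields finer information and identifies $Q_8$ by a presentation instead of by classifying the groups of order $8$.
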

\begin{proof}
Suppose that $G =\langle a, b\rangle$ is a $p\mathcal{E}$-group
and that $\text{exp}(G/G')=n=p^r$. Then $a^n, b^n \in G'=\langle
[a, b]\rangle$ and since $G'$ is a cyclic $p$-group, we have
$\langle a^n,b^n\rangle=\langle a^n\rangle$ or $\langle
b^n\rangle$. Without lose of generality we can suppose that
$b^n=a^{ns}$ for some integer $s$. Then
$$(ba^{-s})^n= [b, a]^{sn(n-1)/2}\eqno{(1)}$$ which by Lemma \ref{2.5} is
trivial if $p$ is odd or if $p=2$ and either $\text{exp}(G')\leq
2^{r-1}$ or $2|s$. In any of these cases we would have that
$ba^{-s}$ is in $Z(G)$  and $G$ is abelian. So one can suppose
that $p=2$, that $s$ is odd and that $\text{exp}(G')=2^r$. In
this case the equality $(1)$ implies that $(ba^{-s})^{2n}=1$ and
since $G$ is a $2\mathcal{E}$-group, we have $(ba^{-s})^2\in
Z(G)$. Thus
$$1=[(ba^{-s})^2,a]=[ba^{-s},a]^2=[b,a]^2.$$ It follows that
$r\leq 1$ and so   $|G|=|\frac{G}{G'}||G'|\leq 8$  and $G$ is
either abelian or $G\cong Q_8$ or the dihedral group $D_8$ of
order 8. But $D_8$ is not a $2\mathcal{E}$-group, since there are
elements of order 2 in $D_8$ which are not central. On the other
hand $Q_8$ is a $2\mathcal{E}$-group, since
$\text{exp}(Q_8/Q_8')=2$ and the only element of order 2 in $Q_8$
is central.   This completes the proof.
\end{proof}
\begin{lem}\label{Z-2} Let $G$ be a $p\mathcal{E}$-group and let
{\rm $\text{exp}(\frac{G}{G'})=p^{r}$}. Then
$Z_2(G)^{p^r}=Z(G)\cap G^{p^r}$. In particular if {\rm
$\text{cl}(G)=3$}, then {\rm
$\text{exp}(\frac{Z_2(G)}{Z(G)})=3^r$}.
\end{lem}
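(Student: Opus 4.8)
The plan is to prove the two inclusions of $Z_2(G)^{p^r}=Z(G)\cap G^{p^r}$ separately, using three elementary facts about the $2$-Engel group $G$. First, $[x,y,x]=1$ and $[x,y,y]=1$ for all $x,y$, so the commutator collection formulas degenerate to $[x^n,y]=[x,y]^n=[x,y^n]$ for all $x,y\in G$ and $n\in\mathbb N$. Second, $[G',Z_2(G)]=1$: since $[Z_2(G),G]\le Z(G)$ we have $[[Z_2(G),G],G]=1$, and applying the Three Subgroup Lemma with the subgroups $G$, $Z_2(G)$, $G$ yields $[[G,G],Z_2(G)]=1$. Third, $\text{exp}(G/G')=p^r$ forces $G^{p^r}\le G'$, and $\Omega_r(G)\le Z(G)$ is part of the hypothesis.

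For the inclusion $\subseteq$: if $w\in Z_2(G)$ and $g\in G$, then $g^{p^r}\in G^{p^r}\le G'$, so $[w,g^{p^r}]=1$ by the second fact, while $[w,g^{p^r}]=[w,g]^{p^r}$ by the first; hence $[w,g]^{p^r}=1$, and therefore $[w^{p^r},g]=[w,g]^{p^r}=1$. Thus $w^{p^r}\in Z(G)$, and since $w^{p^r}\in G^{p^r}$ always holds, $Z_2(G)^{p^r}\le Z(G)\cap G^{p^r}$. For the inclusion $\supseteq$ with $p$ odd, I would invoke Lemma~\ref{2.4}: every element of $G^{p^r}$ is a genuine $p^r$-th power $g^{p^r}$, and if $g^{p^r}=z\in Z(G)$ then $[g,h]^{p^r}=[g^{p^r},h]=1$ for all $h$, so $[g,h]\in\Omega_r(G)\le Z(G)$ and hence $g\in Z_2(G)$, giving $z\in Z_2(G)^{p^r}$. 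For $p=2$ the statement is nearly vacuous: a $2$-Engel $2$-group has $\gamma_3(G)$ a $2$-group of exponent dividing $3$, hence trivial, so $\text{cl}(G)\le 2$; then $G'\le Z(G)$, $Z_2(G)=G$, and $G^{2^r}\le G'\le Z(G)$, so both sides coincide with $G^{2^r}$.

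For the final clause, assume $\text{cl}(G)=3$. By Lemma~\ref{2.5}(iii) we have $p=3$ and $\text{exp}(G')=3^{r+1}$, whence $\text{exp}(G/Z(G))=3^{r+1}$ by Lemma~\ref{2.5}(i). The identity just proved gives $Z_2(G)^{3^r}=Z(G)\cap G^{3^r}\le Z(G)$, so $\text{exp}(Z_2(G)/Z(G))$ divides $3^r$. To see it is not smaller, note that every cube lies in $Z_2(G)$: for $g,h,k\in G$ one computes $[g^3,h,k]=[g,h,k]^3\in\gamma_3(G)^3=1$. Since $\text{exp}(G/Z(G))=3^{r+1}>3^r$, choose $g$ with $g^{3^r}\notin Z(G)$; then $g^3\in Z_2(G)$ and $(g^3)^{3^{r-1}}=g^{3^r}\notin Z(G)$, so $g^3Z(G)$ has order exceeding $3^{r-1}$ in $Z_2(G)/Z(G)$, forcing $\text{exp}(Z_2(G)/Z(G))=3^r$.

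The only real care needed is the bookkeeping with the collection formulas — checking that the error terms $[x,y,x]$ genuinely vanish and that $G^{p^r}\le G'$ and $[G',Z_2(G)]=1$ are applied in the right order in the $\subseteq$ step — together with the one genuinely useful observation $G^3\le Z_2(G)$, which transfers the exponent of $G/Z(G)$ down to $Z_2(G)/Z(G)$; the ostensibly separate case $p=2$ turns out to be trivial, since then $G$ already has nilpotency class at most $2$.
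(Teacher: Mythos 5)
Your proof is correct and follows essentially the same route as the paper: both inclusions rest on the 2-Engel power-commutator identities together with $[Z_2(G),G']=1$, $G^{p^r}\le G'$, $\Omega_r(G)\le Z(G)$ and Lemma \ref{2.4}, and the exponent claim rests on $G^3\le Z_2(G)$ (Remark \ref{2.8}) and Lemma \ref{2.5}. The differences are cosmetic: you check the inclusion $\subseteq$ on generators rather than applying Lemma \ref{2.4} to $Z_2(G)$, you treat $p\neq 3$ explicitly instead of the paper's opening reduction to $p=3$, and you prove $\text{exp}(Z_2(G)/Z(G))=3^r$ directly by cubing an element of order $3^{r+1}$ modulo $Z(G)$ instead of the paper's argument by contradiction.
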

\begin{proof}
By \cite[Theorem 12.3.6]{r1}, we may   assume that $p=3$. Let
$x\in Z_2(G)^{3^r}$. By Lemma \ref{2.4}, $x=y^{3^r}$ for some
$y\in Z_2(G)$. Since  $G^{3^r}\leq G'\leq Z_2(G)$, we have
$$[x,g]=[y^{3^r},g]=[y,g^{3^r}]=1$$ for all $g\in G$. This implies that  $x\in Z(G)$.\\
Now assume that $x\in G^{3^r}\cap Z(G)$. Then  $x=y^{3^r}$ for
some $y\in G$ and so $$1=[x,g]=[y^{3^r},g]=[y,g]^{3^r}$$ for all
$g\in G$. Then $[y,g]\in \Omega_r(G)\leq Z(G)$ which implies that
$y\in Z_2(G)$. Hence $x\in Z_2(G)^{3^r}$. This completes the proof of the first part.\\
If $\text{cl}(G)=3$ and $Z_2(G)^{3^{r-1}}\leq Z(G)$, then we have
$$G^{3^r}=(G^3)^{3^{r-1}}\leq Z_2(G)^{3^{r-1}}\leq Z(G),$$
(note that since $\gamma_3(G)^3=1$, $G^3\leq Z_2(G)$). Thus
$G^{3^r}\leq Z(G)$ which is impossible by  Lemma \ref{2.5}(ii).
 Therefore $\text{exp}(\frac{Z_2(G)}{Z(G)})=3^r$.
\end{proof}
\begin{rem}\label{2.8}
{\rm If $G$ is a  $2$-Engel group, then $G^3G'\leq Z_2(G)$. This
is because, $\text{cl}(G)\leq 3$ and $\gamma_3(G)^3=1$ (see
\cite[Theorem 12.3.6]{r1}). Thus if $G$ is a finite $2$-Engel
$3$-group, then we always have that $\Phi(G)\leq Z_2(G)$.}
\end{rem}
\begin{lem}\label{2.9}
 Let $G$ be a $2$-Engel group. If $\frac{G}{Z_2(G)}$ is $2$-generator, then
{\rm $\text{cl}(G)\leq 2$}.
\end{lem}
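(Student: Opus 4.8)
The plan is to prove directly that $\gamma_3(G)=1$, working with the multilinear structure that the $2$-Engel law imposes on triple commutators. Write $G/Z_2(G)=\langle\bar a,\bar b\rangle$, so that $G=H\,Z_2(G)$ with $H=\langle a,b\rangle$. Since $G$ is $2$-Engel it is nilpotent of class at most $3$, so $\gamma_4(G)=1$ and hence $\gamma_3(G)\le Z(G)$; moreover $\gamma_3(G)^3=1$ by \cite[Theorem 12.3.6]{r1}. The first step is to record the standard linearization consequences of the $2$-Engel identity: the map $T\colon G\times G\times G\to\gamma_3(G)$, $T(x,y,z)=[x,y,z]$, is multilinear (this uses that $\gamma_3(G)$ is central and $\gamma_4(G)=1$) and changes sign under any transposition of its three arguments; in particular $T(x,x,z)=T(x,z,z)=1$, and by antisymmetry $T$ vanishes as soon as two of its arguments coincide. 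Also $\gamma_3(G)$ is generated by the values of $T$.

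The second step is to push $T$ down to a $2$-generated abelian group. If one of the arguments of $T$ lies in $Z_2(G)$ then $T$ vanishes there: indeed $[Z_2(G),G]\le Z(G)$, so $[[Z_2(G),G],G]=1$, and by antisymmetry the same conclusion holds for the other two slots. Thus $T$ factors through $(G/Z_2(G))^3$. Being multilinear into the abelian group $\gamma_3(G)$, $T$ is, in each slot, a homomorphism $G/Z_2(G)\to\gamma_3(G)$, hence it kills the derived subgroup of $G/Z_2(G)$ and so factors through $\big((G/Z_2(G))^{\mathrm{ab}}\big)^3$, where $(G/Z_2(G))^{\mathrm{ab}}$ denotes the abelianization.

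The last step is a pigeonhole argument. The group $(G/Z_2(G))^{\mathrm{ab}}$ is abelian and generated by the images of $a$ and $b$, so every element is a product of powers of these two. Expanding $T(\alpha,\beta,\gamma)$ by multilinearity turns it into a product of values $T(\bar u,\bar v,\bar w)$ with $u,v,w\in\{a,b\}$; every such triple has a repeated entry, so each of these values equals $1$ by the alternating property. Hence $T\equiv 1$, i.e. $\gamma_3(G)=1$ and $\mathrm{cl}(G)\le 2$.

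The only real subtlety is the bookkeeping in the first two steps — verifying multilinearity and antisymmetry of $T$ from the $2$-Engel law, and checking that $T$ genuinely descends to the abelianization of $G/Z_2(G)$ — which is routine but must be done carefully; everything after that is forced. (Alternatively, one can argue by hand: $H=\langle a,b\rangle$ is a $2$-generator $2$-Engel group, hence of class at most $2$, so $\gamma_3(H)=1$; using $[Z_2(G),G]\le Z(G)$ together with $[G',Z_2(G)]=1$ one shows $G'\le H'Z(G)$ and then $\gamma_3(G)=[G',G]=[H',G]=[H',H]=\gamma_3(H)=1$.)
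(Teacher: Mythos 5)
Your argument is correct, but it is organized differently from the paper's. The paper simply observes that $G=\langle a,b,Z_2(G)\rangle$, so that $G'$ is generated modulo $\gamma_3(G)\le Z(G)$ by the commutators $[x,y]$ with $x,y\in\{a,b\}\cup Z_2(G)$; any such commutator with an entry in $Z_2(G)$ lies in $[Z_2(G),G]\le Z(G)$, and $[a,b]$ is central by the $2$-Engel identities, whence $G'\le Z(G)$ directly. You instead package the same Levi identities (trilinearity and full antisymmetry of $(x,y,z)\mapsto[x,y,z]$, which you must either verify or cite --- they are part of the standard proof of \cite[Theorem 12.3.6]{r1}) into the statement that the triple commutator is an alternating trilinear map vanishing whenever an argument lies in $Z_2(G)$, hence descending to the abelianization of $G/Z_2(G)$; since that quotient is $2$-generator, every value is a product of terms $[\bar u,\bar v,\bar w]$ with $u,v,w\in\{a,b\}$, each having a repeated entry, so $\gamma_3(G)=1$. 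Your route costs more linearization bookkeeping but makes transparent exactly why two generators modulo $Z_2(G)$ force class at most $2$ (an alternating trilinear form on a module needing two generators is trivial), whereas the paper's proof is shorter but terser, leaving to the reader the centrality of $[a,b]$, i.e.\ $[a,b,z]=1$ for $z\in Z_2(G)$. Your parenthetical alternative via $H=\langle a,b\rangle$ of class at most $2$, using $[Z_2(G),G]\le Z(G)$ and $[G',Z_2(G)]=1$ (the latter following from $[x,y,z]=[z,x,y]$), is essentially the paper's own argument written out in full, and it is also correct.
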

\begin{proof}
If $\frac{G}{Z_2(G)}= \langle aZ_2(G), bZ_2(G)\rangle$, then $G
=\langle a,b, Z_2(G)\rangle$. Thus
$$G'=\langle[x,y] ,\gamma_3(G) \;|\; x,y \in \{a,b\}\cup Z_2(G)\rangle.$$
Since $G$ is 2-Engel, we have $G'\leq Z(G)$. This completes the
proof.
\end{proof}
\begin{thm}\label{3e}
Every  $3$-generator $p\mathcal{E}$-group is nilpotent of class
at most $2$.
\end{thm}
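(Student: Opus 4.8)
The plan is to reduce the problem to showing that $G/Z_2(G)$ is $2$-generator, after which Lemma \ref{2.9} finishes the argument. So suppose, for a contradiction, that $G=\langle a_1,a_2,a_3\rangle$ is a $3$-generator $p\mathcal{E}$-group with $\text{cl}(G)=3$; by Theorem 12.3.6 of \cite{r1} (via Lemma \ref{2.5}(iii)) we must have $p=3$, and write $\text{exp}(G/G')=3^r$. The idea is to play off the two descriptions of the "torsion" in the central quotients: on the one hand $\Omega_r(G)\le Z(G)$ by definition of a $3\mathcal{E}$-group, and on the other hand Lemma \ref{Z-2} tells us $\text{exp}(Z_2(G)/Z(G))=3^r$ exactly. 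Using regularity (Lemma \ref{2.4}) every element of $G$ is a $3^r$-th power times an element of $G'\le Z_2(G)$, so $G/Z_2(G)$ is generated by the three images $\bar a_i^{\,3^r}$ — wait, that is not quite it; rather, $G/G'$ has exponent $3^r$, so $G^{3^r}\le G'\le Z_2(G)$, and hence $G/Z_2(G)$ has exponent dividing $3^r$ while being generated by $\bar a_1,\bar a_2,\bar a_3$.

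Now I would analyze the elementary abelian-by-(bounded exponent) structure of $\bar G:=G/Z_2(G)$. Since $\text{cl}(G)=3$, $\bar G$ is abelian (indeed $G'\le Z_2(G)$ forces $\bar G$ abelian), so $\bar G$ is a finite abelian $3$-group of exponent dividing $3^r$, generated by $3$ elements. If $\bar G$ were $2$-generator we would be done by Lemma \ref{2.9}, so we may assume $d(\bar G)=3$, i.e. $\bar G\cong C_{3^{r_1}}\times C_{3^{r_2}}\times C_{3^{r_3}}$ with each $r_i\ge 1$ and $r_i\le r$. The key step is then to extract from each cyclic factor a witness: choose $b_i\in G$ whose image generates the $i$-th factor, so that $b_i^{3^{r_i}}\in Z_2(G)$ but $b_i^{3^{r_i-1}}\notin Z_2(G)$. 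I then want to show $b_i^{3^{r_i-1}}$ lands in $Z(G)$ — because $\Omega_r(G)\le Z(G)$ combined with the structure of $G/Z(G)$ (whose exponent equals $\text{exp}(G')=3^{r+1}$ by Lemma \ref{2.5}(i),(iii)) pins down where powers can live. Concretely, in $G/Z(G)$ the element $b_iZ(G)$ has order $3^{r_i}$ or $3^{r_i+1}$; if it is $3^{r_i}$ then $b_i^{3^{r_i}}\in Z(G)$ so $b_i^{3^{r_i}}\in\Omega_{r}(G)$ automatically, but the point is to locate $b_i^{3^{r_i}}$ inside $Z(G)\cap G^{3^{r_i}}$ and compare with Lemma \ref{Z-2}.

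The main obstacle — and where the real content lies — is controlling the commutators $[b_i,b_j]$ and the third-layer commutators $[b_i,b_j,b_k]$ well enough to force a contradiction from the three "independent" generators. My intended mechanism: in a $2$-Engel group one has the identities $[x,y,z]=[y,z,x]^{-1}=[z,x,y]$ and $[x,y,z]^3=1$, together with $[xy,z]=[x,z][y,z][x,z,y]$; using these I would show that $\gamma_3(G)=\langle[b_1,b_2,b_3]\rangle$ has order dividing $3$, and that each $b_i^{3}$ is "almost central" in the sense that $[b_i^3,b_j]=[b_i,b_j]^3\in\gamma_3(G)$. Then the constraint $\text{exp}(G/G')=3^r$ forces, after collecting, a relation among $b_1^{3^{r-1}},b_2^{3^{r-1}},b_3^{3^{r-1}}$ modulo $Z(G)$; combined with $\Omega_r(G)\le Z(G)$ this should show one of the $b_i^{3^{r_i-1}}$ already lies in $Z(G)$, collapsing $d(\bar G)$ to $2$ and contradicting the assumption. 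I expect the bookkeeping of exponents $r,r_1,r_2,r_3$ against the exponent bounds of Lemma \ref{2.5} and Lemma \ref{Z-2} to be the delicate part; the $2$-Engel commutator calculus itself is routine once set up.
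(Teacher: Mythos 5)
Your proposal is a plan rather than a proof: the decisive step is missing. After reducing (correctly) to showing that $G/Z_2(G)$ is $2$-generator so that Lemma \ref{2.9} applies, you assume $d(G/Z_2(G))=3$, pick witnesses $b_1,b_2,b_3$, and then only assert that the $2$-Engel identities together with $\mathrm{exp}(G/G')=3^r$ and $\Omega_r(G)\leq Z(G)$ ``should'' force some $b_i^{3^{r_i-1}}$ into $Z(G)$ ``after collecting''. That is exactly the content of the theorem, and nothing in the sketch actually produces the required relation; indeed the one concrete structural claim you make along the way --- that $b_iZ(G)$ has order $3^{r_i}$ or $3^{r_i+1}$ --- is not justified, since a priori its order only lies between $3^{r_i}$ and $\mathrm{exp}(G/Z(G))=3^{r+1}$. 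Knowing $[b_i^3,b_j]=[b_i,b_j]^3\in\gamma_3(G)$ and $|\gamma_3(G)|\leq 3$ does not by itself locate any $3^{r_i-1}$-st power of a $b_i$ in $Z(G)$, so the contradiction is never reached.

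For comparison, the paper's argument avoids $Z_2(G)$ altogether and works directly with the three generators $x,y,z$: setting $H=(G')^3\gamma_3(G)$ (which is central of exponent dividing $3^r$ by Lemma \ref{2.5}(iii)), one writes $x^{3^r},y^{3^r},z^{3^r}$ modulo $H$ as words in $[x,y],[y,z],[z,x]$ with exponents in $\{-1,0,1\}$; the relations $[x^{3^r},x]=1$ kill the ``wrong'' commutator in each expression (using $[x,y,z]\neq 1$), and the identities $[x^{3^r},y]=[x,y^{3^r}]$ force the remaining exponents to be antisymmetric. Raising to the $3^r$-th power then gives $[x,y]^{3^r},[y,z]^{3^r},[z,x]^{3^r}\in\gamma_3(G)$ and hence $x^{3^{2r}}=y^{3^{2r}}=z^{3^{2r}}=1$, so regularity (Lemma \ref{2.4}) yields $G^{3^{2r}}=1$, contradicting $\mathrm{exp}(G)=3^r\,\mathrm{exp}(G')=3^{2r+1}$ from Lemma \ref{2.5}. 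If you want to salvage your outline, you would need to supply an argument of comparable concreteness in place of the ``collecting'' step; as written, the proposal has a genuine gap at its core.
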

\begin{proof}
For a contradiction suppose that $G =\langle x, y, z \rangle$ is a
$p\mathcal{E}$-group of class 3. Suppose that
$\text{exp}(G/G')=3^r$. Let $H=(G')^3 \gamma_3(G)$. Notice that,
by Lemma \ref{2.5}, $[H,G]=H^{3^r}=1$. Modulo $H$ we have that
$$
  x^{3^r}=[x, y]^{\alpha}[y,z]^{t_1}[z, x]^{\beta}, \;\;
  y^{3^r}=[x, y]^{\gamma} [y, z]^{\beta'} [z,x]^{t_2}, \;\;
   z^{3^r}=[x,y]^{t_3}[y, z]^{\alpha'}[z, x]^{\gamma'},$$
 for some integers $\alpha, \beta,\gamma,\alpha',\beta',\gamma',t_1,t_2,t_3 \in \{-1,
0, 1\}$. Since $[x,y,z]\not=1$, it follows that $t_i$'s must all
be zero. Now since $[x^{3^r},y]=[x,y^{3^r}]$, one can see that
$\beta'=-\beta$. Similarly one can deduce that $\gamma'=-\gamma$
and $\alpha'=-\alpha$. Therefore, modulo $H$ we  have
$$  x^{3^r}=[x, y]^{\alpha}[z, x]^{\beta}, \;\;
  y^{3^r}=[x, y]^{\gamma} [y, z]^{-\beta} \;\;
   z^{3^r}=[y, z]^{-\alpha}[z, x]^{-\gamma}.
$$
  It follows that
$$
[x, y]^{3^r}=[x, y, z]^{\beta}, \;\;
 [z, x]^{3^r}=[x, y, z]^{-\alpha}, \;\;
 [y, z]^{3^r} = [x, y, z]^\gamma.$$
 Then $x^{3^{2r}} = y^{3^{2r}}=z^{3^{2r}}=1$. Now since $G$ is regular, it follows that  $G^{3^{2r}}= 1$
that contradicts Lemma \ref{2.5}.
\end{proof}
 \begin{thm}\label{3e-gen}
Let $G$ be a non-abelian  $3$-generator $p\mathcal{E}$-group,
{\rm $\text{exp}(\frac{G}{G'})=p^r$}, {\rm $\text{exp}(G')=p^t$}
and $p>2$. Then $|G|=p^{3(r+t)}$ and $G$ has the following
presentation
\begin{align*} \langle x,y,z \;|\;
x^{p^{r+t}}=y^{p^{r+t}}=z^{p^{r+t}}=[x^{p^t},y]=[x^{p^t},z]=
[y^{p^t},x]=[y^{p^t},z]=[z^{p^t},x]=[z^{p^t},y]=1, &\\
 [x,y]=x^{p^rt_{11}}y^{p^rt_{12}}z^{p^rt_{13}},
[x,z]=x^{p^rt_{21}}y^{p^rt_{22}}z^{p^rt_{23}},
[y,z]=x^{p^rt_{31}}y^{p^rt_{32}}z^{p^rt_{33}} \rangle,
\end{align*} where $1\leq t\leq r$ and $[t_{ij}]\in
GL(3,\mathbb{Z}_{p^t})$. Moreover  every group with the above
presentation is a $p\mathcal{E}$-group.
 \end{thm}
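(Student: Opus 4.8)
The plan is to extract from the hypothesis enough structural information to pin down both the order and the commutator relations of $G$. By Theorem \ref{3e} a $3$-generator $p\mathcal{E}$-group is nilpotent of class at most $2$, and since $G$ is assumed non-abelian it has class exactly $2$; hence $G' \leq Z(G)$ and commutators are bilinear. By Lemma \ref{2.5}(ii) we have $\text{exp}(G') \leq p^r$, so $t \leq r$; the bound $t \geq 1$ is just non-abelianness. Write $G = \langle x,y,z\rangle$. First I would observe that $G/G'$ is generated by three elements and $\Omega_r(G/G')$ must be trivial modulo... more precisely, using $\text{exp}(G/G') = p^r$ and the $p\mathcal{E}$-condition $\Omega_r(G) \leq Z(G)$, I would argue that $G/G'$ is homocyclic of rank $3$ and exponent $p^r$, i.e.\ $G/G' \cong (C_{p^r})^3$: it has exponent exactly $p^r$ by definition, and if it were a proper quotient of $(C_{p^r})^3$ then some nontrivial power pattern would force a generator's $p^{r}$-th—rather $p^{r-1}$-th—... here one uses that $\Omega_r(G) \le Z(G)$ together with $\text{cl}(G)=2$ to see no collapse in rank can occur while keeping $G$ non-abelian with the prescribed exponents. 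This gives $|G/G'| = p^{3r}$.

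Next I would determine $G'$. Since $\text{cl}(G) = 2$, $G'$ is generated by $[x,y], [x,z], [y,z]$, and $G' \leq Z(G)$, so $G'$ is an abelian $p$-group of rank at most $3$ and exponent $p^t$. Using Lemma \ref{2.4} ($G$ is regular, $p>2$) and Lemma \ref{2.5}(i), which gives $\text{exp}(G) = p^{r+t}$ and $\text{exp}(G') = \text{exp}(G/Z(G))$, I would show $G' \cong (C_{p^t})^3$ is homocyclic of rank $3$: the key point is that $G^{p^r} \leq G'$ (from $\text{exp}(G/G')=p^r$) together with $G^{p^r} = \{g^{p^r} : g\in G\}$ (Lemma \ref{2.4}) and $\Omega_r(G)\le Z(G)$ forces $x^{p^r}, y^{p^r}, z^{p^r}$ to be three "independent" elements of $G'$ of order exactly $p^t$ each, whence $|G'| \geq p^{3t}$; the reverse inequality $|G'|\le p^{3t}$ is automatic since $G'$ is $3$-generated of exponent $p^t$. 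Hence $|G| = |G/G'|\,|G'| = p^{3(r+t)}$, which is the order claim.

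Now for the presentation. The relations $x^{p^{r+t}} = y^{p^{r+t}} = z^{p^{r+t}} = 1$ follow from $\text{exp}(G) = p^{r+t}$. The relations $[x^{p^t}, y] = \cdots = 1$ follow because $x^{p^t} \in Z_2(G)$—indeed $[x^{p^t},y] = [x,y]^{p^t} = 1$ since $\text{exp}(G')=p^t$ and class $2$; so these are really consequences of bilinearity and $\text{exp}(G')=p^t$. For the last block, since $x^{p^r},y^{p^r},z^{p^r}$ generate $G' \cong (C_{p^t})^3$, each commutator $[x,y], [x,z], [y,z]$ lies in $G'$ and so can be written uniquely as $x^{p^r t_{i1}} y^{p^r t_{i2}} z^{p^r t_{i3}}$ with $t_{ij} \in \mathbb{Z}_{p^t}$. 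That the matrix $[t_{ij}]$ lies in $GL(3,\mathbb{Z}_{p^t})$ is exactly the statement that $[x,y],[x,z],[y,z]$ form a minimal generating set of $G' \cong (C_{p^t})^3$, equivalently that $[t_{ij}]$ is invertible mod $p$; this invertibility is forced by the rank computation $|G'| = p^{3t}$ combined with the fact that $G'$ is generated by the three commutators. A counting argument then shows the abstract group defined by the displayed presentation has order at most $p^{3(r+t)}$, so the epimorphism from it onto $G$ is an isomorphism.

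For the converse ("every group with the above presentation is a $p\mathcal{E}$-group"), I would take the abstract group $\Gamma$ given by the presentation, check directly that $\Gamma$ has class $\leq 2$ (all commutators of commutators vanish by the relations), compute $\Gamma'$ and verify $\Gamma' \cong (C_{p^t})^3$ using invertibility of $[t_{ij}]$ mod $p$, deduce $\text{exp}(\Gamma/\Gamma') = p^r$ and $\Omega_r(\Gamma) \leq Z(\Gamma)$ (the latter because an element of order dividing $p^r$ is, modulo $\Gamma'$, a $p^t$-th... one checks its commutators vanish using $\text{exp}(\Gamma') = p^t \le p^r$), and that $\Gamma$ is $2$-Engel (immediate from class $2$). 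The main obstacle I anticipate is the rank-$3$ homocyclic claims for $G/G'$ and for $G'$: one must rule out any "degeneracy" in which the three given generators fail to be independent modulo the relevant subgroup. The cleanest route is to work modulo $H = (G')^p\gamma_3(G)$ as in the proof of Theorem \ref{3e}, reducing to the case $\text{exp}(G') = p$ where everything becomes linear algebra over $\mathbb{F}_p$, and there show that non-abelianness (a nonzero commutator) together with $\Omega_r(G)\le Z(G)$ forces the $p^r$-power map $G/G' \to G'$ to be injective with full-rank image; then lift back up using regularity (Lemma \ref{2.4}) and induction on $t$.
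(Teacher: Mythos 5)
There is a genuine gap, and you yourself point at it: the homocyclicity/nondegeneracy claims that carry the whole theorem are asserted rather than proved. Your steps ``$G/G'\cong (C_{p^r})^3$, no collapse in rank can occur'' and ``$x^{p^r},y^{p^r},z^{p^r}$ are forced to be three independent elements of $G'$ of order exactly $p^t$'' are precisely the content of the statement $|G|=p^{3(r+t)}$, and nothing in your sketch actually derives them. The paper's mechanism is a counting argument you never formulate: since $G$ is regular, $|G:\Omega_r(G)|=|G^{p^r}|$, and since $\Omega_r(G)\leq Z(G)$ while $G^{p^r}\leq G'$, this gives $|G:Z(G)|\leq |G'|$. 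On the other side, writing $G/Z(G)=\langle aZ(G)\rangle\times\langle bZ(G)\rangle\times\langle cZ(G)\rangle$ with orders $p^{t},p^{t},p^{s}$ ($s\leq t$, using $\exp(G/Z(G))=\exp(G')=p^t$ from Lemma \ref{2.5}(i)) and bounding the orders of the three generating commutators yields $|G'|\leq p^{t+2s}$ while $|G:Z(G)|=p^{2t+s}$; comparing forces $s=t$, hence $|G'|=|G:Z(G)|=p^{3t}$, $G'=\langle[a,b]\rangle\times\langle[a,c]\rangle\times\langle[b,c]\rangle$, and then further counting gives $G'=G^{p^r}$, $Z(G)=\Omega_r(G)=G^{p^t}$, $|a|=|b|=|c|=p^{r+t}$ and the two homocyclic decompositions. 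This chain of inequalities is the heart of the proof and is absent from your proposal.

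Your proposed repair does not close the gap either. Reducing modulo $H=(G')^p\gamma_3(G)$ is fine in Theorem \ref{3e}, where one only needs a contradiction with class $3$, but here the $p\mathcal{E}$-hypothesis is not obviously inherited by $G/(G')^p$: the subgroup $\Omega_r$ of the quotient can be strictly larger than the image of $\Omega_r(G)$, so ``reduce to $\exp(G')=p$ and do linear algebra over $\mathbb{F}_p$, then induct on $t$'' is not justified as stated. A smaller but real issue is the converse (``moreover'') part: to verify $\Gamma'\cong(C_{p^t})^3$, $\exp(\Gamma/\Gamma')=p^r$ and $\Omega_r(\Gamma)\leq Z(\Gamma)$ for the abstractly presented group $\Gamma$ you need a lower bound on $|\Gamma|$, i.e.\ an explicit model showing no collapse occurs; the paper supplies this by realizing the group on the set $\mathbb{Z}_{p^{r+t}}^3$ with an explicit multiplication, whereas your plan only bounds $|\Gamma|$ from above. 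The skeleton of your argument (class $2$ via Theorem \ref{3e}, $t\leq r$, exponent $p^{r+t}$, von Dyck plus an order count) matches the paper, but without the regularity-based counting the key equalities $|G'|=p^{3t}$ and $|G:G'|=p^{3r}$ remain unproved.
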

\begin{proof}
By Theorem \ref{3e}, $\text{cl}(G)=2$. Since $G$ is 3-generator,
there exist elements $a,b,c \in G$ such that
$\frac{G}{Z(G)}=\langle aZ(G) \rangle \times \langle bZ(G)
\rangle \times \langle cZ(G) \rangle$. Thus, since $G'$ has
exponent $p^t$,  we have $|aZ(G)|=|bZ(G)|=p^t$ and  $|cZ(G)|=p^s$
for some integers $t,s\geq 0$ with $t\geq s$. We also have
$G'=\langle [a,b],[a,c],[b,c]\rangle$, since $\text{cl}(G)=2$ and
$G=\langle a,b,c,Z(G) \rangle$. Since $|aZ(G)|=p^t$ and
$|cZ(G)|=p^s$, we have $|[a,b]|\leq p^t$, $|[a,c]|\leq p^s$ and
$|[b,c]|\leq p^s$. Therefore $|G'|\leq p^{t+2s}$. Also, since $G$
is regular, $|G:\Omega_r(G)|=|G^{p^r}|$. Then $|G|\leq
|\Omega_r(G)||G^{p^r}|\leq |Z(G)||G'|$ and so $|G:Z(G)|\leq |G'|$.
Hence $p^{2t+s}\leq p^{t+2s}$ and $t\leq s$. It follows that
$s=t$, $|G'|=|\frac{G}{Z(G)}|=p^{3t}$ and $G'=\langle [a,b]\rangle
\times \langle [a,c]\rangle \times \langle [b,c]\rangle$. Since
$G$ is not abelian, $t\geq 1$. Thus $ \frac{G}{G^pZ(G)}\cong C_p
\times C_p \times C_p$. This implies
that $G=\langle a,b,c\rangle$.\\
Now, since $G^{p^r}\leq G'$ and $|G'|=|G:Z(G)|\leq
|G:\Omega_r(G)|=|G^{p^r}|$, we have $G'=G^{p^r}$. Also we have
$G^{p^r}=\langle a^{p^r},b^{p^r},c^{p^r}\rangle$ (since $t\leq
r$). By Lemma  \ref{2.5} $\text{exp}(G)=p^{r+t}$ and since
$G'=G^{p^r}$ is an abelian group of  order $p^{3t}$ it follows
that $G^{p^r}=\langle a^{p^r}\rangle \times \langle b^{p^r}\rangle
\times \langle c^{p^r}\rangle$, $t\leq r$ and
$|a|=|b|=|c|=p^{r+t}$. Also since $G^{p^t}=\langle
a^{p^t},b^{p^t},c^{p^t}\rangle$ and $G^{p^r}=\langle
a^{p^r}\rangle \times \langle b^{p^r}\rangle \times \langle
c^{p^r}\rangle\leq G^{p^t}$, it is not hard to see that
$G^{p^t}=\langle a^{p^t}\rangle \times \langle b^{p^t}\rangle
\times \langle c^{p^t}\rangle$ and so
$$p^{3r}=|G^{p^t}|\leq |\Omega_r(G)|\leq |Z(G)|=|G:G'|\leq p^{3r}.$$
It follows that $G^{p^t}=\Omega_r(G)=Z(G)$,
$|\Omega_t(G)|=|G:G^{p^t}|=|G'|$ and so $G'=\Omega_t(G)$. Thus we
have the following information about $G$:
\begin{align*}
&|G|=p^{3(r+t)}, \text{exp}(G)=p^{r+t}, G=\langle a,b,c\rangle, \\
& |a|=|b|=|c|=p^{r+t} \\ &Z(G)=\Omega_r(G)=G^{p^t}=\langle
a^{p^t}\rangle \times \langle b^{p^t}\rangle \times \langle
c^{p^t}\rangle \\ & G'=\Omega_t(G)=G^{p^r}=\langle a^{p^r}\rangle
\times \langle b^{p^r}\rangle \times \langle c^{p^r}\rangle
 \end{align*}
Hence  there exists a $3\times 3$ matrix $T=[t_{ij}]\in
GL(3,\mathbb{Z}_{p^t})$ such that
\begin{align*}
 [a,b]=a^{p^rt_{11}}b^{p^rt_{12}}c^{p^rt_{13}}&\\
[a,c]=a^{p^rt_{21}}b^{p^rt_{22}}c^{p^rt_{23}}&\\
[b,c]=a^{p^rt_{31}}b^{p^rt_{32}}c^{p^rt_{33}}
\end{align*}
and every element of $G$ can be written as $a^ib^jc^k$ for some
$i,j,k\in \mathbb{Z}$ and
\begin{align*}
(a^ib^jc^k)(a^{i'}b^{j'}c^{k'})=a^{i+i'-i'jp^rt_{11}-i'kp^rt_{21}-j'kp^rt_{31}}&  \\
  b^{j+j'-i'jp^rt_{12}-i'kp^rt_{22}-j'kp^rt_{32}}
c^{k+k'-i'jp^rt_{13}-i'kp^rt_{23}-j'kp^rt_{33}}
\end{align*}
Now consider $\widetilde{G}=\mathbb{Z}_{p^{r+t}}\times
\mathbb{Z}_{p^{r+t}}\times \mathbb{Z}_{p^{r+t}}$ and define the
following binary operation on $\widetilde{G}$:
\begin{align*}
(i,j,k)(i',j',k')= \big(
i+i'-i'jp^rt_{11}-i'kp^rt_{21}-j'kp^rt_{31},
  j+j'-i'jp^rt_{12}-i'kp^rt_{22}-j'kp^rt_{32},&\\
k+k'-i'jp^rt_{13}-i'kp^rt_{23}-j'kp^rt_{33} \big)
\end{align*}
It is easy to see that $\widetilde{G}$ with this binary operation
is a group and $G\cong \widetilde{G}$. Now one can easily see
that the group $G$ has the required presentation.
\end{proof}
\begin{thm}\label{autabelian} {\rm(}The main result of \cite{mm2}{\rm)} For $p$ an odd prime, there
exists no finite non-abelian $3$-generator $p$-group having an
abelian automorphism group.
\end{thm}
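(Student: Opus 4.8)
\emph{Sketch of a possible proof.} The plan is to argue by contradiction: suppose $G$ is a finite non-abelian $3$-generator $p$-group with $p$ odd and $Aut(G)$ abelian, and produce an automorphism of $G$ that is not central (or, in one case, reach a contradiction directly). Since $Aut(G)$ is abelian it equals $C_{Aut(G)}(Inn(G))$, and one checks that this centralizer is precisely the group $Aut_c(G)$ of central automorphisms of $G$; so, under our hypothesis, \emph{every} automorphism of $G$ is central, and a non-central one gives the desired contradiction.

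First I would run the standard reductions. From $Inn(G)\le Aut(G)$ abelian we get $G/Z(G)$ abelian, hence $\text{cl}(G)\le 2$, so $\text{cl}(G)=2$ as $G$ is non-abelian; in particular $G$ is a $2$-Engel $p$-group with $p$ odd, so by Lemma~\ref{2.4} it is regular, with $G^{p^m}=\{g^{p^m}:g\in G\}$ and $|G:\Omega_m(G)|=|G^{p^m}|$ for all $m$; moreover $G/Z(G)$ is a quotient of $G/G'$, so $\text{exp}(G')=\text{exp}(G/Z(G))$ divides $\text{exp}(G/G')$. Next, using $Aut(C_p)\ne 1$ for odd $p$, one shows $G$ has no non-trivial abelian direct factor: if $G=A\times B$ with $A$ a non-trivial cyclic $p$-group, then a non-trivial power automorphism of $A$ and a non-trivial homomorphism $A\to Z(B)$ do not commute. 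I would also invoke the standard fact that $Z(G)\le\Phi(G)$. Then the Adney--Yen formula for purely non-abelian $p$-groups gives $|Aut_c(G)|=|Hom(G/G',Z(G))|$, so $Aut(G)=Aut_c(G)$ is a finite abelian $p$-group and, since $Inn(G)\cong G/Z(G)$ is one of its subgroups, the abelian group $G/Z(G)$ must embed into $Hom(G/G',Z(G))$, a condition on both orders and invariant factors.

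Now the main argument. Write $G/G'\cong C_{p^{a_1}}\times C_{p^{a_2}}\times C_{p^{a_3}}$ with $a_1\ge a_2\ge a_3\ge 1$ and record the invariants of $G'$ and $Z(G)$; the commutator induces an alternating bilinear map $\beta\colon G/Z(G)\times G/Z(G)\to G'$ with trivial left radical and image generating $G'$, whose reduction modulo $p$ is an alternating form over the field of $p$ elements with values in $G'/(G')^p$. Since $G$ is non-abelian, $d(G/Z(G))\ne 1$, and $d(G/Z(G))\le d(G)=3$. If $d(G/Z(G))=3$ and $d(G')=1$: the reduced form is alternating of odd rank into a one-dimensional space, hence singular, so some minimal generating set $\{x,y,z\}$ of $G$ has $[x,G]\le(G')^p$; as $G'$ is cyclic, $[y,z]$ generates it, and a suitable $w\in\langle y^p,z^p\rangle$ makes $[xw,y]=[xw,z]=1$, so $xw\in Z(G)$ and $x\in Z(G)G^p=\Phi(G)$, contradicting $x$ being a member of a minimal generating set. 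In the remaining cases --- $d(G/Z(G))=2$ (so $G'$ is cyclic and the third generator lies in $Z(G)$ but, $G$ being purely non-abelian, cannot split off) and $d(G/Z(G))=3$ with $d(G')\in\{2,3\}$ --- I would use the embedding $G/Z(G)\hookrightarrow Hom(G/G',Z(G))$, the identity $|G/Z(G)|=|G/G'|\,|G'|/|Z(G)|$, the bound $d(G')\le\binom{3}{2}=3$ and the regularity-equalities above to cut the possible invariant types of $G/G'$, $G'$ and $Z(G)$ down to a short explicit list, and then in each surviving type exhibit a non-central automorphism of $G$: typically a lift to $G$ of a transvection or a permutation of generators in $GL(G/\Phi(G))$, or a map $g_i\mapsto g_ig_j^{p^k}$ (other generators fixed) with $g_j^{p^k}\notin Z(G)$ but of order small enough to be compatible with the power relations of $G$.

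I expect the real difficulty to lie in these remaining cases, above all $d(G/Z(G))=2$, where one must control all the abelian invariants, the precise shape of $\beta$, and the requirement that $G$ have \emph{no} non-central automorphism whatsoever --- and this is exactly where ``$p$ odd'' is indispensable: for $p$ odd, regularity (Lemma~\ref{2.4}) makes the $p$-power maps additive enough that the power relations of $G$ leave room to modify a generator by a non-central element of small order, yielding the required automorphism, whereas for $p=2$ this rigidity fails and genuine examples exist --- the smallest being Miller's non-abelian group of order $2^{6}$, which necessarily has $d(G/Z(G))=2$ by the clean argument above. Carrying out the reduction to finitely many invariant types and, in each, writing down the explicit automorphism $\gamma$ with $g^{-1}\gamma(g)\notin Z(G)$ for some $g$, is the laborious heart of the proof; the surrounding estimates are routine computations with finite abelian $p$-groups.
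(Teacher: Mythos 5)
The first thing to note is that the paper does not prove this statement at all: Theorem~\ref{autabelian} is imported verbatim as ``the main result of \cite{mm2}'' (Morigi's Comm.\ Algebra paper), so there is no in-paper proof to compare yours against. The only question is whether your sketch constitutes an independent proof, and it does not. Your opening reductions are fine and standard (from $Aut(G)$ abelian: every automorphism is central since $C_{Aut(G)}(Inn(G))=Aut_c(G)$; $\mathrm{cl}(G)=2$; $G$ purely non-abelian, so the Adney--Yen count $|Aut_c(G)|=|Hom(G/G',Z(G))|$ applies; $Z(G)\le\Phi(G)$ --- though this last fact also deserves an argument rather than being ``invoked''), and your treatment of the case $d(G/Z(G))=3$, $d(G')=1$ via the radical of the induced alternating form is a genuine, correct argument. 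But the cases you yourself identify as the heart of the matter --- $d(G/Z(G))=2$, and $d(G/Z(G))=3$ with $d(G')\in\{2,3\}$ --- are left entirely as intentions (``I would use \dots to cut the possible invariant types down to a short explicit list, and then in each surviving type exhibit a non-central automorphism''). No list is produced, no automorphism is written down, and no verification is given that the proposed modifications of generators (transvections, maps $g_i\mapsto g_ig_j^{p^k}$) actually extend to automorphisms compatible with the power--commutator relations; this compatibility is exactly where the difficulty lies and where ``$p$ odd'' must be exploited, and it occupies the bulk of Morigi's twenty-page argument.

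So the gap is concrete: the proposal is a plausible strategy outline, aligned in spirit with the known proof (central automorphisms, $Hom$-counting, invariant-type analysis, explicit non-central automorphisms), but the decisive case analysis is not carried out, and there is no evidence in the sketch that it closes --- indeed the existence of $2$-groups with abelian automorphism group shows the construction of the required automorphism cannot be routine and must hinge on delicate $p$-odd-specific computations that are missing here. As it stands, the statement remains what the paper treats it as: a cited theorem, not something your sketch establishes.
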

\begin{thm}\label{3-gen}
Let $G$ be a non-abelian finite $3$-generator $pE$-group and
$p>2$. Then {\rm $\text{exp}(G') < \text{exp}(\frac{G}{G'})$}.
\end{thm}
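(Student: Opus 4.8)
\emph{Plan.} Since a $pE$-group is a $p\mathcal{E}$-group (Remark~\ref{Q_8}), Theorem~\ref{3e-gen} applies to our $G$: $\mathrm{cl}(G)=2$, $|G|=p^{3(r+t)}$ with $\text{exp}(G/G')=p^{r}$, $\text{exp}(G')=p^{t}$, $1\le t\le r$, and $G$ has the displayed presentation with $[t_{ij}]\in GL(3,\mathbb{Z}_{p^{t}})$. Since $\text{exp}(G')\le\text{exp}(G/G')$ already holds, it suffices to rule out $t=r$. So I assume $t=r$ and aim to show that this forces $\text{Aut}(G)$ to be abelian, which is impossible by Theorem~\ref{autabelian} as $G$ is a finite non-abelian $3$-generator $p$-group with $p$ odd.

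\emph{A linear algebra model.} With $t=r$ the structure theorem gives $Z(G)=G'=G^{p^{r}}=\Omega_{r}(G)$, and both $V:=G/G'$ and $W:=G'$ are free $\mathbb{Z}_{p^{r}}$-modules of rank $3$, with distinguished bases induced by $x,y,z$ and by $x^{p^{r}},y^{p^{r}},z^{p^{r}}$. Because $\mathrm{cl}(G)=2$, $p$ is odd and $\text{exp}(G')=p^{r}$, the $p^{r}$-th power map $\pi\colon g\mapsto g^{p^{r}}$ factors through $V$, is a homomorphism onto $W$ (the correction term $[h,g]^{p^{r}(p^{r}-1)/2}$ dies since $(G')^{p^{r}}=1$), and is an isomorphism with kernel $\Omega_{r}(G)=G'$; in the chosen bases $\pi$ is the identity. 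The commutator gives an alternating bilinear map $s\colon V\times V\to W$, and $[t_{ij}]\in GL(3,\mathbb{Z}_{p^{r}})$ is precisely the statement that $s$ is nondegenerate (concretely $s(\mathbf u,\mathbf v)=P(\mathbf u\times\mathbf v)$ with $P\in GL(3,\mathbb{Z}_{p^{r}})$). By von Dyck's theorem, a map $x\mapsto A$, $y\mapsto B$, $z\mapsto C$ extends to an endomorphism $\alpha$ of $G$ exactly when the three commutator relations survive (the power relations hold because $\text{exp}(G)=p^{2r}$, and the relations $[x^{p^{r}},y]=1$ survive because $G^{p^{r}}\le Z(G)$); identifying $W$ with $V$ via $\pi$, this condition becomes
\[ s(M\mathbf u,M\mathbf v)=M\,s(\mathbf u,\mathbf v)\qquad(\mathbf u,\mathbf v\in V), \]
where $M\in M_{3}(\mathbb{Z}_{p^{r}})$ is the matrix that $\alpha$ induces on $V$ (and, via $\pi$, also on $W$).

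\emph{Using the $E$-condition.} For every $\alpha\in\text{End}(G)$ and every $g$ we have $[g,g^{\alpha}]=1$; replacing $g$ by $gh$ and using $\mathrm{cl}(G)=2$ gives $[g,h^{\alpha}]=[g^{\alpha},h]$ for all $g,h$, i.e.\ $s(\mathbf u,M\mathbf v)=s(M\mathbf u,\mathbf v)$. Setting $\mathbf v=\mathbf u$ and using that $s$ is alternating and $p$ is odd forces $s(M\mathbf u,\mathbf u)=0$ for all $\mathbf u$. Since $s$ is nondegenerate on a rank-$3$ module, for primitive $\mathbf u$ the relation $s(\mathbf z,\mathbf u)=0$ (equivalently $\mathbf z\times\mathbf u=0$) forces $\mathbf z\in\mathbb{Z}_{p^{r}}\mathbf u$; hence $M\mathbf u\in\mathbb{Z}_{p^{r}}\mathbf u$ for every primitive $\mathbf u$, and testing on $\mathbf e_{1},\mathbf e_{2},\mathbf e_{3},\mathbf e_{1}+\mathbf e_{2},\mathbf e_{2}+\mathbf e_{3}$ shows $M=\mu I$ is scalar. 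Feeding $M=\mu I$ into the endomorphism condition gives $\mu^{2}=\mu$; if $\alpha$ is an automorphism then $\mu$ is a unit, so $\mu=1$, i.e.\ $\alpha$ acts trivially on $G/G'$. Thus every automorphism of $G$ is central, so $\text{Aut}(G)=\text{Aut}_{c}(G)$. Finally a central automorphism fixes $G'$ pointwise, and since $Z(G)=G'$ it fixes $Z(G)$ pointwise; hence, writing $f_{\alpha}(gG')=g^{-1}g^{\alpha}\in Z(G)$, the composition law collapses to $f_{\alpha\beta}=f_{\alpha}+f_{\beta}$, so $\alpha\mapsto f_{\alpha}$ embeds $\text{Aut}_{c}(G)$ into the abelian group $\text{Hom}(G/G',Z(G))$. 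Therefore $\text{Aut}(G)$ is abelian, contradicting Theorem~\ref{autabelian}.

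The crux is the passage through the $\mathbb{Z}_{p^{r}}$-module model --- verifying that $\pi$ and $s$ behave as claimed, that an endomorphism is cut out by exactly the three commutator relations, and then squeezing the two identities $s(M\mathbf u,M\mathbf v)=Ms(\mathbf u,\mathbf v)$ and $s(M\mathbf u,\mathbf v)=s(\mathbf u,M\mathbf v)$ down to ``$M$ is scalar''. This is exactly where $p$ being odd (so that $2$ is invertible and $G$ is regular) and the rank being $3$ (so alternating forms are cross products and the self-adjointness collapses) enter, and where some care is needed since $\mathbb{Z}_{p^{r}}$ is a local ring rather than a field.
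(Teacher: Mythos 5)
Your proof is correct and follows essentially the same route as the paper: reduce via Theorem \ref{3e-gen} (taking $t=r$) to the rigid structure $Z(G)=G'=G^{p^r}=\Omega_r(G)$, use the $E$-condition to show every automorphism is central and fixes $Z(G)=G'$ pointwise, deduce that $Aut(G)$ is abelian, and contradict Theorem \ref{autabelian}. Your $\mathbb{Z}_{p^r}$-module/cross-product formulation is merely a repackaging of the paper's explicit generator computations (your $\mu^{2}=\mu$ is the paper's $i^{2}\equiv i \pmod{p^{r}}$).
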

\begin{proof}
Suppose, for a contradiction, that $\text{exp}(G') \geq
\text{exp}(\frac{G}{G'})$. By Lemma \ref{2.5},
$\text{exp}(G')=\text{exp}(\frac{G}{G'})=p^r$ and by the proof of
Theorem \ref{3e-gen}, we have $G=\langle a,b,c\rangle$ and
$$Z(G)=\Omega_r(G)=G^{p^r}=G'=\langle [a,b]\rangle \times \langle
[a,c]\rangle \times \langle [b,c]\rangle, |a|=|b|=|c|=p^{2r},
|[a,b]|=|[a,c]|=|[b,c]|=p^r.$$
If we prove that $Aut(G)$ is abelian, then  Theorem \ref{autabelian} completes the proof.\\
Let $\alpha\in Aut(G)$. There exist integers $i,n,m$ and an
element $w\in G'$ such that $a^\alpha=a^ib^nc^mw$. Since
$[a^\alpha,a]=1$, we have $[b,a]^n[c,a]^m=1$ and so $n\equiv
m\equiv 0$  (mod $p^r$). Therefore $a^\alpha=a^iw_a$ and similarly
$b^\alpha=b^jw_b$, $c^\alpha=c^kw_c$, where $1\leq i,j,k\leq
p^r-1$ and $w_a,w_b,w_c\in G'=Z(G)$. From $[(ab)^\alpha,ab]=1$
and $[(ac)^\alpha,ac]=1$, it follows respectively   that  $i=j$
and $i=k$. Also from equality $G^{p^r}=G'$, we have
$a^{p^r}=[a,b]^t[b,c]^r[a,c]^s$ where $t,r$ and $s$ are integers.
Then
$(a^\alpha)^{p^r}=[a^\alpha,b^\alpha]^t[b^\alpha,c^\alpha]^r[a^\alpha,c^\alpha]^s$
and we obtain $a^{p^ri}=x^{p^ri^2}$. Therefore $i^2\equiv i $ (mod
$p^r$) and so $i=1$. Therefore all automorphisms of $G$ are
central so that they fix the elements of $G'=Z(G)$. If $\alpha,
\beta\in Aut(G)$, then $x^{\alpha\beta}=x^{\beta\alpha}$ for every
$x\in \{a,b,c\}$. Hence $Aut(G)$ is abelian which contradicts
Theorem \ref{autabelian}.
\end{proof}
\begin{cor}\label{exp}
Let $G$ be a finite $3$-generator $pE$-group and $p>2$. If {\rm
$\text{exp}(G) \leq p^2$}, then $G$ is abelian.
\end{cor}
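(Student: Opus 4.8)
The plan is to argue by contradiction and combine the two exponent facts already available: the multiplicativity of exponents in Lemma~\ref{2.5}(i) and the strict inequality of Theorem~\ref{3-gen}. Assume $G$ is a finite $3$-generator $pE$-group with $p>2$ and suppose, for contradiction, that $G$ is non-abelian. Since a $pE$-group is a $p\mathcal{E}$-group (Remark~\ref{Q_8}), all the structural lemmas apply; in particular, writing $\text{exp}(G/G')=p^r$, Lemma~\ref{2.5}(i) gives $\text{exp}(G)=p^r\,\text{exp}(G')$.

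Next I would extract the size of $r$. Because $G$ is non-abelian, $G'\neq 1$, so $\text{exp}(G')\geq p$. On the other hand, Theorem~\ref{3-gen} applies (a non-abelian finite $3$-generator $pE$-group with $p>2$) and yields $\text{exp}(G')<\text{exp}(G/G')=p^r$. Combining, $p\leq\text{exp}(G')<p^r$, which forces $r\geq 2$.

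Finally, substitute back into the exponent formula: $\text{exp}(G)=p^r\,\text{exp}(G')\geq p^2\cdot p=p^3$. This contradicts the hypothesis $\text{exp}(G)\leq p^2$. Hence no such non-abelian $G$ exists, i.e.\ every finite $3$-generator $pE$-group of exponent at most $p^2$ (with $p>2$) is abelian.

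I do not expect a genuine obstacle here: the corollary is a short deduction from Theorem~\ref{3-gen} together with Lemma~\ref{2.5}(i), and the only point requiring a (trivial) check is that $G$ non-abelian indeed gives $\text{exp}(G')\geq p$, which is immediate. The content of the statement is entirely carried by Theorem~\ref{3-gen} (whose proof in turn rests on Theorem~\ref{autabelian}).
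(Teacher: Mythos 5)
Your proof is correct and uses the same ingredients as the paper's own argument (Lemma~\ref{2.5}(i) together with Theorem~\ref{3-gen}); the paper just runs the deduction in the opposite order, noting that $\text{exp}(G)\leq p^2$ forces $\text{exp}(G')=\text{exp}(G/G')=p$, which contradicts Theorem~\ref{3-gen} directly. This is essentially the same short deduction, so no further comment is needed.
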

\begin{proof}
If $G$ is non-abelian, then by Lemma \ref{2.5} we have
$\text{exp}(G')=\text{exp}(\frac{G}{G'})=p$ which contradicts
Theorem \ref{3-gen}.
\end{proof}
\begin{lem}\label{lep^5}
 Let $G$ be a  $p\mathcal{E}$-group such that  $|G|\leq
p^5$. Then $G$ is abelian or $G$ is isomorphic to one of the
following groups:
$Q_8$, $Q_8\times C_2$, $Q_8\times C_2\times C_2$,\\
$\left< x, y, z \;|\; x^4=y^4=[y,z]=1, x^2=z^2=[x,y],
(xz)^2=y^2\right>$ or\\
$\left< x, y, z \;|\; x^4=z^4=[y,z]=1, x^2=y^2=[x,y],
[x,z]=z^2\right>$
\end{lem}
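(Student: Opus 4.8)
The plan is to classify all non-abelian $p\mathcal{E}$-groups of order at most $p^5$, reducing immediately to $p=2$ (for $p$ odd I would use Lemma~\ref{2.4} together with Theorem~\ref{p-epsilon-2-gen} and Theorem~\ref{3e-gen}: a non-abelian $p\mathcal{E}$-group with $p$ odd has at least three generators, and by Theorem~\ref{3e-gen} its order is $p^{3(r+t)}\geq p^6$, so no non-abelian example of order $\leq p^5$ exists). So assume $G$ is a non-abelian $2\mathcal{E}$-group with $|G|\leq 2^5$. First I would dispose of the $2$-generator case: by Theorem~\ref{p-epsilon-2-gen} such a $G$ is isomorphic to $Q_8$. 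Thus I may assume $d(G)\geq 3$. Since $G$ is $2$-Engel, $\mathrm{cl}(G)\leq 3$; but if $\mathrm{cl}(G)=3$ then by Lemma~\ref{2.5}(iii) $p=3$, contradiction, so $\mathrm{cl}(G)=2$, i.e. $G'\leq Z(G)$ and $\gamma_3(G)=1$.

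Next I would run the counting that already appears inside the proof of Theorem~\ref{3e-gen}, but now over the prime $2$ and without assuming $d(G)\leq 3$. Writing $\mathrm{exp}(G/G')=2^r$, regularity-type inequalities give $|G:Z(G)|\leq |G'|$ via $|G|\leq |\Omega_r(G)||G^{2^r}|\leq |Z(G)||G'|$ (using $G^{2^r}\leq G'$ and $\Omega_r(G)\leq Z(G)$). Combined with $|G|\leq 2^5$ and $\mathrm{cl}(G)=2$, this sharply limits the possibilities: $|G'|\geq 2$, $|G/Z(G)|$ is a non-cyclic $2$-group dividing $|G'|^{?}$, and $|G/G'|$ has exponent $2^r$ with $|G'|\leq |G/G'|$-type constraints. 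I expect the surviving cases to be $|G|=2^4$ with $|G'|=2$, and $|G|=2^5$ with $|G'|\in\{2,4\}$. For $|G'|=2$ the group is a central extension of $C_2$ by an elementary-or-not abelian group; here the condition $\Omega_r(G)\leq Z(G)$ forces every element of order $2$ to be central, which rules out groups with non-central involutions (e.g. $D_8$, $D_8\times C_2$, the Pauli group, etc.) and leaves exactly $Q_8$, $Q_8\times C_2$, and $Q_8\times C_2\times C_2$ (the extraspecial-of-minus-type groups and their central products with $C_2$'s); I would check each candidate against the two defining conditions of a $p\mathcal{E}$-group.

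The remaining and most delicate case is $|G|=2^5$ with $|G'|=4$: then $r=1$ (since $\mathrm{exp}(G/G')=2$ would make $G/G'$ elementary abelian of order $2^3$, but we need $|G/G'|=8$ consistent with $|G'|=4$), $\Omega_1(G)\leq Z(G)$, $|Z(G)|=|G/G'|=8$, $|G/Z(G)|=4$, so $G/Z(G)\cong C_2\times C_2$ and $G'\cong C_2\times C_2$. By Lemma~\ref{2.9}, such a $G$ of class $2$ with $G/Z(G)$ $2$-generated would be... wait — here $G/Z(G)$ \emph{is} $2$-generated yet $\mathrm{cl}(G)=2$ is allowed, so I instead pick generators $x,y$ with $G=\langle x,y,Z(G)\rangle$ and $G'=\langle [x,y]\rangle$ is cyclic — contradicting $G'\cong C_2\times C_2$. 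Hence $G$ must actually be $3$-generated with $G/Z(G)$ of rank $2$ only after adjoining a central generator; carefully, $G=\langle x,y,z\rangle$ where $z\in Z(G)$ modulo nothing helps, so the honest statement is $d(G)=3$ and $G/G'$ of exponent $2$ with $|G/G'|=8$. I would then set up a presentation on generators $x,y,z$ with $[y,z]=1$ (possible after a linear change, since $G/Z(G)\cong C_2^2$ means two of the three generators commute modulo $Z(G)$, and in fact commute as $\mathrm{cl}(G)=2$), and with $x^2, y^2, z^2, (xy)^2$, etc., lying in $G'=\langle [x,y], [x,z]\rangle\cong C_2^2$; the relations $\Omega_1(G)\leq Z(G)$ and the $2$-Engel identity then pin down the squares, yielding exactly the two presentations listed, $\langle x,y,z\mid x^4=y^4=[y,z]=1,\ x^2=z^2=[x,y],\ (xz)^2=y^2\rangle$ and $\langle x,y,z\mid x^4=z^4=[y,z]=1,\ x^2=y^2=[x,y],\ [x,z]=z^2\rangle$. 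The main obstacle is this last enumeration: one must be sure that the bookkeeping of which square equals which commutator is exhaustive up to isomorphism and that each resulting group genuinely satisfies $\Omega_1(G)\leq Z(G)$ (i.e. has no stray non-central involution) and $\mathrm{exp}(G/G')=2$ — a finite but fiddly case check, which I would organize by the isomorphism type of the quadratic map $G/G'\to G'$, $\bar g\mapsto g^2$, together with the commutator form $G/G'\times G/G'\to G'$.
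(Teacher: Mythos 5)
Your reduction for odd $p$ is essentially the paper's, but your handling of $p=2$ — which is where all the content of the lemma lies — contains a genuine error. The counting step you import from the proof of Theorem~\ref{3e-gen} rests on regularity (Lemma~\ref{2.4}), which is stated and true only for $p>2$; for $p=2$ the inequality $|G|\le|\Omega_r(G)|\,|G^{2^r}|$ and its consequence $|G:Z(G)|\le|G'|$ are simply false. Already $Q_8\times C_2$, which is in the lemma's list and is $3$-generated, has $|\Omega_1(G)|\,|G^2|=8<16=|G|$ and $|G:Z(G)|=4>2=|G'|$. The same error propagates into your ``most delicate case'' $|G|=2^5$, $|G'|\cong C_2\times C_2$: you deduce $|Z(G)|=8$ and $G/Z(G)\cong C_2\times C_2$, but for both order-$32$ groups in the statement one actually has $Z(G)=G'$ of order $4$ and $G/Z(G)\cong C_2^3$, so the ``$G'$ cyclic'' contradiction you reach would wrongly eliminate precisely the two exceptional groups (your own mid-argument hesitation reflects this). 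Finally, the decisive enumeration of the order-$2^4$ and $2^5$ candidates is never actually carried out — you write ``I expect the surviving cases to be\dots'' and defer to ``a finite but fiddly case check'' — yet that enumeration, including verifying that each listed presentation really satisfies $\Omega_1(G)\le Z(G)$ and $\exp(G/G')=2^r$, is the entire assertion being proved.

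For comparison, the paper settles $p=2$ by a {\sf GAP} computation showing there are exactly five non-abelian $2\mathcal{E}$-groups of order at most $2^5$, namely those listed; a correct hand classification along your lines would be a genuine improvement, but it must avoid regularity-based counting (a regular $2$-group is abelian) and must be completed case by case. For odd $p$ there is also a small gap in your sketch: Theorem~\ref{3e-gen} only covers $3$-generator groups, so the case $d(G)\ge4$ still needs an argument; the paper handles it by noting $|Z(G)|\ge|\Omega_1(G)|=|G:G^p|\ge p^4$ (regularity is legitimate here), forcing $|G:Z(G)|\le p$, which is impossible for a non-abelian group, and only then invokes Theorem~\ref{3e-gen} for $d(G)=3$.
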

\begin{proof} It can be easily checked by {\sf GAP} \cite{g} that  there are
exactly five non-abelian  $2\mathcal{E}$-groups, which are the
same as listed in the lemma. Thus we may assume that $p>2$ and $G$
is non-abelian. If $d(G)\geq 4$, then $|Z(G)|\geq
|\Omega_1(G)|=|G:G^p|\geq p^4$  (since $G$ is regular) and so
$|G:Z(G)|\leq p$, which is impossible. Hence $d(G)=3$ and so
$|G|\geq p^4$ which contradicts Theorem \ref{3e-gen}.
\end{proof}
\begin{rem}\label{PE}
Suppose that  $G$ is a finite $p$-group such that $\Omega_1(G)\leq
Z(G)$. If $G$ has no  non-trivial abelian direct factor, then
$\Omega_1(G)\leq \Phi(G)$. To see this, let $x\in G$ be of order
$p$ and $x\not\in \Phi(G)$. Then there exists a maximal subgroup
$M$ such that $x\not\in M$. Since $x\in Z(G)$, we have
$\left<x\right>\trianglelefteq G$, so that
$G=M\times\left<x\right>$,  a contradiction.
\end{rem}
\begin{thm}\label{p^7-ep}
Let $G$ be a  $p\mathcal{E}$-group having no abelian direct
factor and $p>2$. If $|G|=p^7$, then $G$ is abelian.
\end{thm}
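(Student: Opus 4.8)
The plan is to rule out every possible value of $d(G)$ for a non-abelian $p\mathcal{E}$-group $G$ of order $p^7$ with no abelian direct factor, $p$ odd. First I would dispose of the large-generator cases exactly as in Lemma \ref{lep^5}: since $G$ is regular (Lemma \ref{2.4}) and $\Omega_1(G)\leq Z(G)$, we have $|Z(G)|\geq|\Omega_1(G)|=|G:G^p|\geq p^{d(G)}$, so $d(G)\leq 5$ would already be forced; and if $d(G)\geq 4$ then combining $|G:Z(G)|\leq p^{7-d(G)}\leq p^3$ with the fact that $\mathrm{cl}(G)\leq 3$ and Lemma \ref{2.9} (applied after checking $G/Z_2(G)$ is then $2$-generated) should push $G$ to class $\leq 2$, whence $|G'|=|G:Z(G)|$ and a short counting argument with $\mathrm{exp}(G')$, $\mathrm{exp}(G/G')$ via Lemma \ref{2.5}(i),(ii) yields a contradiction. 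So the real content is the case $d(G)=3$.

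When $d(G)=3$, Theorem \ref{3e-gen} applies: $G$ is non-abelian $3$-generator $p\mathcal{E}$ with $p>2$, so $|G|=p^{3(r+t)}$ where $p^r=\mathrm{exp}(G/G')$, $p^t=\mathrm{exp}(G')$, and $1\leq t\leq r$. Then $3(r+t)=7$ is impossible since $3\nmid 7$. That is the whole point — Theorem \ref{3e-gen} forces the order of a non-abelian $3$-generator $p\mathcal{E}$-group to be divisible by $3$ in the exponent, and $7$ is not. So I would simply invoke Theorem \ref{3e-gen} to kill $d(G)=3$ outright, with no further computation needed.

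What remains is to handle $d(G)=2$: by Theorem \ref{p-epsilon-2-gen} a $2$-generator $p\mathcal{E}$-group is abelian or $Q_8$, and $Q_8$ is a $2$-group, so for $p$ odd there is no non-abelian $2$-generator example at all, let alone one of order $p^7$. And $d(G)=1$ gives a cyclic, hence abelian, group. Assembling: $d(G)\in\{1,2\}$ forces $G$ abelian; $d(G)=3$ is excluded by Theorem \ref{3e-gen} because $7\not\equiv 0\pmod 3$; $d(G)\geq 4$ is excluded by the centralizer/counting argument of the first paragraph. Hence $G$ is abelian.

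The main obstacle I anticipate is the $d(G)=4$ (and $5$) sub-case: one must be careful that $\Omega_1(G)\leq Z(G)$ together with $G$ regular really does give $|Z(G)|\geq p^{d(G)}$, and then that the residual group of order $p^{7-d(G)}\leq p^3$ structure is incompatible with $G$ being a non-abelian $p\mathcal{E}$-group without abelian direct factor. Since $d(G)=4$ gives $|G:Z(G)|\leq p^3$ and $\mathrm{cl}(G)\leq 3$, I would show $G/Z_2(G)$ is cyclic or $2$-generated and apply Lemma \ref{2.9} to get $\mathrm{cl}(G)\leq 2$; then $G'\leq Z(G)$, $|G'|=|G:Z(G)|\leq p^3$, while $|G:G'|\geq p^4$ forces $\mathrm{exp}(G/G')\geq p^2$ only if... — here a clean contradiction with Lemma \ref{2.5}(ii) and the bound $|G|\leq|Z(G)|\,|G'|$ must be extracted, and getting that inequality chain exactly right (especially the interplay of $\mathrm{exp}(G')\leq p^r$ with $|\Omega_r(G)|\leq|Z(G)|$) is the one place where genuine care, rather than a citation, is required.
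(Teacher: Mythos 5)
Your handling of $d(G)\le 3$ matches the paper: $d(G)\le 2$ is killed by Theorem \ref{p-epsilon-2-gen} (for odd $p$ the only non-abelian possibility is $Q_8$), and $d(G)=3$ is killed by Theorem \ref{3e-gen} because $|G|=p^{3(r+t)}$ cannot equal $p^7$. The genuine gap is your case $d(G)\ge 4$. There you only use $\Omega_1(G)\le Z(G)$ and regularity to get $|Z(G)|\ge p^{d(G)}$, then hope that ``class $\le 2$ plus a counting argument with Lemma \ref{2.5} and $|G|\le |Z(G)||G'|$'' gives a contradiction. No such contradiction exists, because your argument never actually invokes the hypothesis that $G$ has no abelian direct factor, and without that hypothesis the statement is false for $d(G)=4$: take $H$ a non-abelian $3$-generator $p\mathcal{E}$-group with $r=t=1$ (these exist by the ``moreover'' part of Theorem \ref{3e-gen}), so $|H|=p^6$, and set $G=H\times C_p$. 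Then $G$ is a non-abelian $p\mathcal{E}$-group of order $p^7$ with $d(G)=4$, $\mathrm{cl}(G)=2$, $|Z(G)|=p^4$, $|G'|=p^3$, $\mathrm{exp}(G')=\mathrm{exp}(G/G')=p$; it satisfies every inequality you propose to exploit, so the ``clean contradiction'' you anticipate cannot be extracted from those data alone.

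The missing idea is exactly where the no-abelian-direct-factor hypothesis enters, namely Remark \ref{PE}: since $\Omega_1(G)\le Z(G)$ and $G$ has no non-trivial abelian direct factor, $\Omega_1(G)\le \Phi(G)$. Then, if $d(G)\ge 4$, regularity gives
$$|\Phi(G)|\;\ge\;|\Omega_1(G)|\;=\;|G:G^p|\;\ge\;|G:\Phi(G)|\;\ge\;p^4,$$
whence $|G|=|G:\Phi(G)|\,|\Phi(G)|\ge p^8$, contradicting $|G|=p^7$. This one-line estimate replaces your entire class-reduction and counting discussion for $d(G)\ge 4$ (and Lemma \ref{2.9} is not needed here at all); with it, your treatment of the remaining cases goes through as in the paper.
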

\begin{proof}
Suppose, for a contradiction, that  $G$ is not abelian. If
$d(G)\geq 4$, by Remark \ref{PE}, we have
 $$|\Phi(G)|\geq|\Omega_1(G)|=|G:G^p|\geq |G:\Phi(G)|\geq p^4$$
  and so $|G|\geq p^8$ which is impossible. Therefore, by Lemma
\ref{p-epsilon-2-gen}, we have $d(G)=3$ which is a contradiction
by Theorem \ref{3e-gen}.
\end{proof}
\begin{lem}\label{E-inf}
Let $G$ be an $E$-group and $a\in G$ be such that
$\left<aG'\right>$ is an infinite direct summand of
$\frac{G}{G'}$. Then $a\in Z(G)$.
\end{lem}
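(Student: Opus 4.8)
The plan is to exploit the fact that $\langle aG'\rangle$ being an infinite direct summand of $G/G'$ gives us a retraction of $G/G'$ onto $\langle aG'\rangle$, which we can lift to a suitable endomorphism of $G$, and then use the $E$-group property against a well-chosen element.

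First I would write $G/G' = \langle aG'\rangle \oplus K/G'$ for some subgroup $K$ with $G' \le K$, where $\langle aG'\rangle \cong \mathbb{Z}$. Fix any $g \in G$. The image $gG'$ decomposes uniquely as $gG' = a^{n}G' \cdot (kG')$ with $kG' \in K/G'$ for a uniquely determined integer $n = n(g)$. Since every $E$-group is 2-Engel, hence nilpotent of class at most $3$ with $\gamma_3(G)^3 = 1$, one has good control on powers and commutators; in particular the map sending the coset data back into $G$ can be arranged so that the assignment $\varphi\colon g \mapsto a^{n(g)}$ is a well-defined endomorphism of $G$. The point is that $n\colon G \to \mathbb{Z}$ is a homomorphism (it is the composite $G \to G/G' \to \langle aG'\rangle \cong \mathbb{Z}$), and since $\langle a\rangle$ is abelian, $g \mapsto a^{n(g)}$ is a homomorphism $G \to \langle a\rangle \le G$, i.e. an endomorphism of $G$. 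Note $\varphi(a) = a^{n(a)} = a$ because $aG'$ has infinite order, so $n(a) = 1$.

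Now apply the $E$-group hypothesis: for every $g \in G$, the element $g$ commutes with its endomorphic image $\varphi(g) = a^{n(g)}$, so $[g, a^{n(g)}] = 1$, i.e. $[g,a]^{n(g)} = 1$ (using 2-Engel, $[g, a^m] = [g,a]^m$). To conclude $a \in Z(G)$ I would argue that for a given $g$, either $n(g) = 0$, in which case I need a different argument, or $n(g) \ne 0$ and then $[g,a]$ has finite order dividing $n(g)$; combining this with a second, modified endomorphism handles the problematic case. Concretely, for $g$ with $n(g) = 0$ (so $gG' \in K/G'$), consider instead the endomorphism $\psi\colon g \mapsto a^{n(ag)}$, equivalently precompose $\varphi$ with left translation — more carefully, use the endomorphism $\theta = \varphi$ composed so that $\theta(g) = a \cdot \varphi(g)$ is not quite an endomorphism, so instead I would directly use that $\varphi(a^k g) = a^{k} \varphi(g) = a^{k}$ when $n(g) = 0$, and apply the $E$-property to the element $ag$: $[ag, \varphi(ag)] = [ag, a] = 1$, which gives $[g,a] = 1$ directly. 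So in all cases $[g,a] = 1$, hence $a \in Z(G)$.

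The main obstacle I expect is the careful verification that $g \mapsto a^{n(g)}$ genuinely extends to an endomorphism of $G$ (as opposed to merely a map on $G/G'$), and the bookkeeping needed in the case $n(g) = 0$; the resolution is that since $\langle a \rangle$ is abelian and $n\colon G \to \mathbb{Z}$ factors through the abelianization, the composite with $\mathbb{Z} \to \langle a\rangle$, $1 \mapsto a$, is automatically a homomorphism into $G$, so it is an endomorphism with no extension problem at all; and the $n(g) = 0$ case is dispatched by testing the $E$-property on $ag$ rather than on $g$, as above. Everything else — the 2-Engel commutator identities $[x, y^m] = [x,y]^m$ and the class bound — is already available from the preliminaries.
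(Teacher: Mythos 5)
Your construction of $\varphi\colon g\mapsto a^{n(g)}$ is sound: $n$ is the composite $G\to G/G'\to\langle aG'\rangle\cong\mathbb{Z}$, and following it with $\mathbb{Z}\to\langle a\rangle$, $1\mapsto a$, gives an endomorphism with $\varphi(a)=a$, no extension problem. This is a genuinely different route from the paper's: there, one composes the projection of $G/G'$ onto $\langle aG'\rangle$ with the homomorphism $a^iG'\mapsto x^i$ (legitimate precisely because $\langle aG'\rangle\cong\mathbb{Z}$ is free), obtaining for each $x$ in a generating set of the complement an endomorphism sending $a$ to $x$; the $E$-property then gives $[a,x]=1$ at once, and $a\in Z(G)$ because $G'\le\Phi(G)$ forces $G=\langle a,X\rangle$. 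In other words, the paper varies the endomorphism and always tests it at $a$, while you fix one endomorphism and vary the test element; both uses of the direct-summand hypothesis are legitimate.

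There is, however, a gap at the end of your argument. For $g$ with $n(g)\neq 0$ you obtain only $[g,a]^{n(g)}=1$, i.e.\ that $[g,a]$ has finite order, and your closing claim ``so in all cases $[g,a]=1$'' does not follow from what you proved: the trick of testing at $ag$, with the computation $\varphi(ag)=a$, is valid only when $n(g)=0$, which is the one case you actually finish. The gap is easy to close within your own framework. Writing $G/G'=\langle aG'\rangle\oplus K/G'$ with $G'\le K$, every $g\in G$ factors as $g=a^{n(g)}k$ with $k\in K$, so $[g,a]=[a^{n(g)},a]^{k}[k,a]=[k,a]$ with $n(k)=0$; hence the case $n=0$ that you settled already yields $a\in Z(G)$. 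Alternatively, test the $E$-property at $ag$ for arbitrary $g$: since $\varphi(ag)=a^{\,n(g)+1}$ and $[ag,a]=[g,a]$ commutes with $a$ (2-Engel), this gives $[g,a]^{\,n(g)+1}=1$, which combined with $[g,a]^{\,n(g)}=1$ forces $[g,a]=1$ because $\gcd\bigl(n(g),n(g)+1\bigr)=1$. With either patch your proof is complete and of essentially the same length and difficulty as the paper's.
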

\begin{proof}
By assumption, we have  $\frac{G}{G'}=\langle aG'\rangle \oplus
\langle XG'\rangle$ for some $X\subseteq G$. Since $G$ is
nilpotent, $G'\leq \Phi(G)$ and so $G=\langle a,X\rangle$.
Therefore it is enough to show that $[a,x]=1$ for all $x\in X$.\\
Let $\pi:G\rightarrow \frac{G}{G'}$ be the natural epimorphism
and $\psi:\langle aG' \rangle \oplus\langle XG'\rangle
\rightarrow \langle aG'\rangle$  the projection map on the first
component. Now  for each $x\in X$ let $\varphi_x: \langle
aG'\rangle \rightarrow \langle x\rangle$ be the map defined by
$a^iG'\mapsto x^i$ for all $i\in\mathbb{Z}$. Since
$\left<aG'\right>\cong\mathbb{Z}$, $\varphi_x$ is a group
homomorphism mapping $aG'$ to $x$. Thus $\pi\psi\varphi_x$ is an
endomorphism of $G$ mapping $a$ to $x$. Since $G$ is an $E$-group,
we have that $[a,x]=1$. This completes the proof.
\end{proof}
\begin{thm}
Let $G$ be an infinite finitely generated  $E$-group. Then
$G=K\times H$, where $K$ is a central torsion-free subgroup of $G$
and $H$ is a finite subgroup of $G$. In particular if $G$ is
infinite and indecomposable then $G$ is infinite cyclic.
\end{thm}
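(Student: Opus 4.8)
The plan is to use Lemma~\ref{E-inf} to capture the torsion‑free part of $G$ inside its center, and then to peel off the finite part using standard finiteness facts about finitely generated nilpotent groups. Since $G$ is an $E$-group it is $2$-Engel, hence nilpotent; being also finitely generated, $G$ is polycyclic, so all its sections are finitely generated and the set $\tau(G)$ of torsion elements is a finite characteristic subgroup. Write the finitely generated abelian group $G/G'$ as $\langle a_1G'\rangle\oplus\cdots\oplus\langle a_nG'\rangle\oplus T$, where each $\langle a_iG'\rangle$ is infinite cyclic and $T=\tau(G/G')$ is finite. By Lemma~\ref{E-inf} each $a_i$ lies in $Z(G)$, so $K:=\langle a_1,\dots,a_n\rangle\le Z(G)$.

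Next I would show that $G'$ is finite. The quotient $G/K$ is finitely generated nilpotent, and its abelianization is $G/KG'\cong T$, which is finite; since a finitely generated nilpotent group with finite abelianization is finite, $K$ has finite index in $G$. Hence $Z(G)$ has finite index in $G$, and Schur's theorem yields that $G'$ is finite. In particular $G'\le\tau(G)$, the subgroup $\tau(G)$ is finite, and $\tau(G)/G'$ is exactly the torsion subgroup $T$ of $G/G'$ (if $x^k\in G'$ for some $k\ge 1$ then $x^k$ has finite order, so $x\in\tau(G)$).

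Now I claim $G=K\times\tau(G)$. Since the images of $a_1,\dots,a_n$ together with $\tau(G)/G'=T$ generate $G/G'$, and since $G'\le\tau(G)$, the subgroup $\langle K,\tau(G)\rangle$ equals $G$; as $K$ is central, this subgroup is $K\,\tau(G)$. Moreover $K$ is torsion-free: if $a_1^{k_1}\cdots a_n^{k_n}$ is a torsion element, its image in $G/G'$ is a torsion element lying in the free summand $\bigoplus_i\langle a_iG'\rangle$, hence is trivial, forcing every $k_i=0$. Therefore $K\cap\tau(G)=1$, and since $K\le Z(G)$ this gives the internal direct decomposition $G=K\times H$ with $H:=\tau(G)$ finite and $K$ a central torsion-free subgroup (indeed free abelian of rank $n$). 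For the final assertion: if $G$ is infinite then $\tau(G)$ being finite forces $n\ge 1$, so $K\ne 1$; if $G$ is also indecomposable then $H=1$, whence $G=K$ is free abelian, and indecomposability forces $n=1$, i.e.\ $G\cong\mathbb{Z}$.

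I expect the only real content to be the finiteness of $G'$: the point is to notice that Lemma~\ref{E-inf} produces \emph{central} lifts of a basis of the free part of $G/G'$, that these generate a finite-index central subgroup, and then to invoke Schur. Everything afterwards is bookkeeping with torsion in a finitely generated nilpotent group, the one delicate point being that the complement $K$ must be chosen inside $Z(G)$: an arbitrary set-theoretic splitting of $G\to G/\tau(G)$ need not be central and the extension $1\to\tau(G)\to G\to\mathbb{Z}^n\to 1$ need not split through a normal complement a priori, so it is precisely the $E$-group hypothesis (via Lemma~\ref{E-inf}) that makes the argument go through.
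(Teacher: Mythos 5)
Your proof is correct, and its core is the same as the paper's: decompose $G/G'$ into infinite cyclic summands plus a finite part, apply Lemma \ref{E-inf} to a lift of each infinite summand to obtain a central subgroup $K=\langle a_1,\dots,a_n\rangle$, check $K$ is torsion-free via the direct decomposition of $G/G'$, and split off a finite complement. Where you diverge is in producing that complement: the paper simply takes $H=\langle b_1,\dots,b_m\rangle$ generated by lifts of the finite summands, observes that $G'=H'$ because the $a_i$ are central, so $H/H'$ is finite and hence the finitely generated nilpotent group $H$ is finite, giving $G=K\times H$ directly. You instead note that $K$ has finite index (finitely generated nilpotent with finite abelianization is finite), invoke Schur's theorem to get $G'$ finite, and take $H=\tau(G)$; the subsequent bookkeeping ($G'\le\tau(G)$, $\tau(G)/G'=T$, $G=K\tau(G)$, $K\cap\tau(G)=1$) is all sound, and of course your $\tau(G)$ coincides with the paper's $H$ once the decomposition is established. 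Your route costs a little extra machinery (Schur, finiteness of the torsion subgroup of a finitely generated nilpotent group) where the paper gets finiteness of the complement for free from $G'=H'$, but it has the small virtue of identifying the complement canonically as the torsion subgroup; your closing remark correctly isolates Lemma \ref{E-inf} as the only place the $E$-group hypothesis enters, which is exactly the paper's point as well.
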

\begin{proof}
Since $G$ is infinite and nilpotent, $\frac{G}{G'}$ is an
infinite finitely generated group. Thus $$\frac{G}{G'}=\langle
a_1G' \rangle\oplus \cdots \oplus\langle a_nG' \rangle \oplus
\langle b_1G' \rangle \oplus\cdots  \oplus \langle b_mG' \rangle
\eqno{(*)}
$$ for some $a_1,\dots,a_n,b_1,\dots, b_m \in G$ such that
$\langle a_iG'\rangle$ is infinite, $\langle b_iG'\rangle$ is
finite  and $G=\langle a_1,\dots,a_n,b_1,\dots,b_m \rangle$. By
Lemma \ref{E-inf}, $K=\langle a_1,\dots,a_n\rangle \leq Z(G)$. It
follows that $G'=H'$, where $H=\langle b_1,\dots,b_m\rangle$. Thus
$\frac{H}{H'}$ is finite and since $H$ is a nilpotent group, $H$
is  finite.  Since $K\leq Z(G)$ and we have the decomposition
$(*)$, $K$ is a torsion-free group. It follows that $G=K \times
H$. Now if $G$ is indecomposable, we must have $H=1$ and $n=1$.
This completes the proof.
\end{proof}
\begin{lem} \label{z2}
Let $G$ be a finite nilpotent $p$-group of class $3$. If
$|G':G'\cap Z(G)|=p$, then $|G:Z_2(G)|=p^2$.
\end{lem}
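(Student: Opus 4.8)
The plan is to descend to $\bar G:=G/Z(G)$, read off a symplectic structure there, and then use that $G$ itself (not just $\bar G$) has class $3$ --- via the Hall--Witt identity --- to force that structure to have rank exactly $2$.

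\emph{Step 1: a symplectic form on $V:=G/Z_2(G)$.} Write $Z:=Z(G)$. Since $\mathrm{cl}(G)=3$, $\bar G=G/Z$ is a nonabelian group of class $2$ with $\bar G'=G'Z/Z\cong G'/(G'\cap Z)$ of order $p$, and $Z(\bar G)=Z_2(G)/Z$, so $V=\bar G/Z(\bar G)$. The commutator map on the class-$2$ group $\bar G$ induces a nondegenerate alternating map $\beta:V\times V\to\bar G'$, additive in each variable. Because $|\bar G'|=p$, nondegeneracy forces $pV=0$, so $V$ is an $\mathbb{F}_p$-vector space; being nondegenerate and alternating over $\mathbb{F}_p$, $\beta$ forces $d:=\dim_{\mathbb{F}_p}V$ to be even, and $d\geq 2$ since $\bar G$ is nonabelian. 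It remains to prove $d\leq 2$.

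\emph{Step 2: $\gamma_3(G)$ is a quotient of $V$.} Since $\mathrm{cl}(G)=3$ we have $\gamma_3(G)=[G',G]\leq Z$. Fix $w\in G'$ mapping to a generator of $\bar G'$; then $G'=\langle w\rangle(G'\cap Z)$, hence $\gamma_3(G)=[\langle w\rangle,G]$, and the map $\theta:G\to\gamma_3(G)$, $g\mapsto[w,g]$, is a surjective homomorphism (it is additive because $[w,g]\in\gamma_3(G)\leq Z$). It kills $Z$, and it kills $Z_2(G)$ because $[G',Z_2(G)]=1$ (an instance of $[\gamma_i(G),Z_j(G)]\leq Z_{j-i}(G)$, with $Z_0(G)=1$, proved by the Three Subgroups Lemma). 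So $\theta$ induces a surjective $\mathbb{F}_p$-linear map $\bar\theta:V\to\gamma_3(G)$; since $\mathrm{cl}(G)=3$, $\gamma_3(G)\neq 1$, whence $\bar\theta\neq 0$.

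\emph{Step 3: Hall--Witt closes the gap.} For $g\in G$ let $\hat g\in V$ denote its image. From $[a,b]\equiv w^{\beta(\hat a,\hat b)}\pmod{G'\cap Z}$ and $[w,c]\in\gamma_3(G)\leq Z$ we get, writing $\gamma_3(G)$ additively,
$$[a,b,c]=[w,c]^{\beta(\hat a,\hat b)}=\beta(\hat a,\hat b)\,\bar\theta(\hat c)\qquad(a,b,c\in G).$$
Since $G$ has class $\leq 3$, the Hall--Witt identity gives $[a,b,c][b,c,a][c,a,b]=1$, i.e.
$$\beta(\hat a,\hat b)\,\bar\theta(\hat c)+\beta(\hat b,\hat c)\,\bar\theta(\hat a)+\beta(\hat c,\hat a)\,\bar\theta(\hat b)=0 .$$
Pick a linear functional $\lambda$ on $\gamma_3(G)$ with $\phi:=\lambda\circ\bar\theta\neq 0$ in $V^{*}$ (possible since $\bar\theta\neq0$ is onto). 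Applying $\lambda$ to the last identity and then specializing the third variable to a vector $a_0$ with $\phi(a_0)=1$ yields
$$\beta(u,v)=\phi(u)\chi(v)-\phi(v)\chi(u)\qquad(u,v\in V),$$
where $\chi:=\beta(a_0,\cdot)\in V^{*}$. Thus $\beta$, as a bilinear form, equals $\phi\wedge\chi$ and so has rank at most $2$; as $\beta$ is nondegenerate, $d=\dim_{\mathbb{F}_p}V\leq 2$. Hence $d=2$ and $|G:Z_2(G)|=|V|=p^{2}$.

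The main obstacle is Step 2 --- correctly exhibiting $\gamma_3(G)$ as a homomorphic image of $V$ and verifying the factorizations, especially $[G',Z_2(G)]=1$; once that is in place, the symplectic input of Step 1 together with one application of Hall--Witt finishes things, the only delicate point there being the coefficient bookkeeping in passing from $[a,b,c]=\beta(\hat a,\hat b)\bar\theta(\hat c)$ and Hall--Witt to the bilinear identity and then to $\beta=\phi\wedge\chi$.
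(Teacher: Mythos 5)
Your proof is correct, and while it leans on the same engine as the paper's proof, it is organized quite differently. The common core is the consequence $[a,b,c][b,c,a][c,a,b]=1$ of the Hall--Witt identity in a group of class at most $3$, exploited together with the fact that every commutator is a power of a fixed $w\in G'$ modulo $G'\cap Z(G)$. The paper's proof is generator-based: it picks $a,b,c_1,\dots,c_r$ with $[a,b]$ generating $G'$ modulo $Z(G)$, replaces each $c_i$ by $c_ia^{\alpha_i}b^{\beta_i}$ so that $[c_i,a],[c_i,b]\in Z(G)$, and then uses that identity to push every $c_i$ into $Z_2(G)$, so $G/Z_2(G)$ is generated by the images of $a$ and $b$. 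You work coordinate-freely: the commutator form $\beta$ on $V=G/Z_2(G)$ together with $\bar\theta(\hat g)=[w,g]$ converts the Jacobi identity into the trilinear relation $\beta(u,v)\phi(t)+\beta(v,t)\phi(u)+\beta(t,u)\phi(v)=0$, whence $\beta=\phi\wedge\chi$ has rank at most $2$ and nondegeneracy forces $\dim V=2$. Your packaging buys some clarity: it makes explicit that $V$ and $\gamma_3(G)$ are elementary abelian and that $\gamma_3(G)$ is a quotient of $V$, and it handles in one uniform computation the several instances of the Jacobi identity that the paper's argument needs (the published proof displays only $[a,b,c_k]=1$, whereas checking $[c_i,c_j]\in Z(G)$ also uses $[c_i,c_j,a]=[c_i,c_j,b]=1$, obtained from the same identity applied to other triples). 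The paper's route is shorter and more elementary, avoiding the choice of the functional $\lambda$ and the identification of $\bar G'$ with $\mathbb{F}_p$; in symplectic language its adjusted generators amount to choosing a basis adapted to your two functionals $\phi$ and $\chi$, so the two arguments are ultimately the same computation seen through different lenses.
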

\begin{proof}
Suppose $G = \langle a, b, c_1, \dots , c_r\rangle$ where $G'Z(G)
= \langle [a, b] \rangle Z(G)$. By replacing $c_i$ by a suitable
$c_ia^{\alpha_i}b^{\beta_i}$ one can assume that $[c_i, a], [c_i,
b] \in Z(G)$ for $i=1,\dots, r$. We claim that $c_1,\dots, c_r \in
Z_2(G)$. For this it suffices to show that $[c_i, c_j ] \in Z(G)$
for $1\leq i < j \leq r$. Suppose $$[c_i, c_j ]=[a, b]^rz$$ with
$z\in Z(G)$. As $$1=[a, b, c_k][b, c_k, a][c_k, a, b] = [a, b,
c_k],$$ this is clear. Hence $G/Z_2(G) = \langle a, b\rangle
Z_2(G)/Z_2(G)$ is of order $p^2$.
\end{proof}
\section{\bf Proofs of the main results}

\noindent{\bf Proof of Theorem \ref{thm3gen}.} Let $G$ be a
3-generator $E$-group. If $\frac{G}{G'}$ is finite, since $G$ is
nilpotent, $G$ is finite and $G$ is a direct product of its Sylow
subgroups. Every Sylow subgroup of $G$ is endomorphic image of
$G$ and so by \cite{m3} they are at most 3-generator $E$-groups.
In this case, Theorem \ref{3e} completes the
proof.\\
If $\frac{G}{G'}$ is infinite, then by the fundamental theorem of
finitely generated abelian groups, we have $\frac{G}{G'}=\langle
aG'\rangle \oplus \langle bG',cG'\rangle$ for some $a,b,c\in G$
such that $aG'$ has infinite order. Thus by Lemma \ref{E-inf},
$a\in Z(G)$ and  since $G$ is 2-Engel, it follows easily that
$G'=\langle [b,c] \rangle$ and since $G$ is 2-Engel,
$\gamma_3(G)=\gamma_3(\langle b,c \rangle)=1$.  This completes the proof. \hfill$\Box$\\

\noindent {\bf Proof of Theorem \ref{thm3^{10}}.} Suppose, for a
contradiction, that  $G$ is a finite $3E$-group of the least order
subject to the properties $\text{cl}(G)=3$ and $|G|\leq 3^{10}$.
Then $G$ is indecomposable and so $\Omega_1(G)\leq \Phi(G)$, by
Remark \ref{PE}. Thus $\Omega_1(G)\leq \Phi(G)\cap Z(G)$. If
$d(G)\geq 5$, then $|\frac{G}{\Phi(G)}|\geq 3^5$. Since $G$ is
regular and $\Phi(G)=G^3G'$,  $|\Phi(G)\cap Z(G)|\geq
|\Omega_1(G)|=|G:G^3|\geq 3^5$ and  since $\text{cl}(G)=3$,
$\Phi(G) \cap Z(G) \lvertneqq \Phi(G)$. It follows that  $|G|\geq
3^{11}$, a contradiction. Thus  Theorem \ref{thm3gen} implies
that $d(G)=4$. If $|G':Z(G)\cap G'|=3$, then by Lemma \ref{z2},
we have $|G:Z_2(G)|=9$. Therefore $\frac{G}{Z_2(G)}$ is a
2-generator group and by Lemma \ref{2.9}, $\text{cl}(G)\leq 2 $, a
contradiction. Hence $|G':Z(G)\cap G'|\geq 9$. Since
$$|G|=|G:\Phi(G)||\Phi(G):\Phi(G)\cap Z(G)||\Phi(G)\cap Z(G)|$$ and
$$|\Phi(G):\Phi(G)\cap Z(G)|=|G'G^3:G'G^3 \cap
Z(G)|=|Z(G)G'G^3:Z(G)|=\frac{|Z(G)G'||G^3|}{|Z(G)G' \cap
G^3||Z(G)|},$$
 we have
 $$|G|=|G:\Phi(G)||G':G'\cap Z(G)||G^3:G'Z(G)\cap G^3||\Phi(G)\cap Z(G)|\geq
 3^{10}.$$
 Thus $|G|=3^{10}$ , $|\Omega_1(G)|=|\Phi(G)\cap Z(G)|=3^4$, $|G':Z(G)\cap G'|=
 9$ and $G^3\leq G'Z(G)$.
 Since $|G:G^3|=|\Omega_1(G)|$, $G^3=\Phi(G)\leq
 Z_2(G)$, $\Phi(G)$ is an abelian group of order $3^6$ and $d(\Phi(G))=4$. Hence
$$\Phi(G)\cong C_{27}\times C_3\times C_3\times C_3 \;\; \text{or} \;\; C_9\times
C_9\times C_3\times C_3.$$ Also we have $G^{'9}=[G^3,G]^3\leq
(\gamma_3(G))^3=1$ and  Lemma \ref{2.5}(ii) yields that
 $\text{exp}(\frac{G}{G'})=3$. Hence by Lemma \ref{Z-2}, we have $Z_2(G)^3 = \Phi(G)\cap Z(G)$.
Now Lemma \ref{2.9} implies that $d(\frac{G}{Z_2(G)})=3$ or $4$.
Then $|Z_2(G)|=3^6$ or $3^7$, which implies that $Z_2(G)=\Phi(G)$
or $|Z_2(G):\Phi(G)|=3$. If $Z_2(G)=\Phi(G)$, then $|Z_2(G)^3| =
|\Phi(G)\cap Z(G)|=9$,  a contradiction. Thus
$|Z_2(G):\Phi(G)|=3$. Since $[Z_2(G),\Phi(G)]=1$, we have
$\Phi(G)\leq Z(Z_2(G))$, $Z_2(G)$ is an abelian group and
$d(Z_2(G))=4$. Hence $$Z_2(G)\cong C_{81}\times C_3\times
C_3\times C_3 \;\; \text{or} \;\; C_{27}\times C_9\times C_3\times
C_3 \;\;\text{or}\;\; C_9\times C_9\times C_9\times C_3.$$ Thus
$|Z_2(G)^3| = |\Phi(G)\cap Z(G)|=27$. This contradiction completes
the proof. \hfill $\Box$\\

\noindent{\bf Proof of Theorem \ref{2-gen}.} $i)$ \; Let $G$ be a
 2-generator $E$-group. Suppose first that $\frac{G}{G'}$ is
finite. Since $G$ is nilpotent, $G$ is finite and so it is the
direct product of its Sylow subgroups. Every Sylow subgroup of $G$
is also at most 2-generator and an $E$-group.
Now Theorem \ref{p-epsilon-2-gen} and Remark \ref{Q_8} imply that every Sylow subgroup of $G$ is abelian and so $G$ is abelian.\\
Therefore we may assume that $\frac{G}{G'}$ is infinite. It
follows from the fundamental theorem of finitely generated
abelian groups, that $\frac{G}{G'}=\langle aG' \rangle
\oplus\langle bG' \rangle$ for some $a,b\in G$ such that $\langle
aG'\rangle$ is infinite.  Since $G'\leq \Phi(G)$, $G=\langle
a,b\rangle$ and
Lemma \ref{E-inf} completes the proof of (i).\\
$ii)$ \; Let $G$ be an infinite  3-generator $E$-group. Since $G$
is infinite and nilpotent, $\frac{G}{G'}$ is an infinite finitely
generated 3-generator group. Thus $\frac{G}{G'}=\langle aG'
\rangle \oplus\langle bG' \rangle \oplus \langle cG' \rangle$ for
some $a,b,c\in G$ such that $\langle aG'\rangle$ is infinite and
$G=\langle a,b,c \rangle$. By Lemma \ref{E-inf}, $a\in Z(G)$. If
either $\langle bG' \rangle$ or  $\langle cG' \rangle$ is
infinite, then Lemma \ref{E-inf} implies that $G$ is abelian. Thus
we may assume that  $\langle bG' \rangle$ and   $\langle cG'
\rangle$ are both finite. It follows that $G'=\langle [b,c]\rangle
\leq H=\langle b,c \rangle$ is finite. Thus $G=\langle a \rangle
\times H$, since $a\in Z(H)$ is of infinite order and $H$ is a
finite group. Hence $H$ is a 2-generator $E$-group, as it is a
direct factor of $G$. Now part (i)
completes the proof.   ~\hfill $\Box$\\

\noindent{\bf Proof of Theorem \ref{thmp^6}.} Suppose, for a
contradiction, that $G$ is a non-abelian $pE$-group of order
$p^6$.  We  see that non-abelian groups in Lemma \ref{lep^5} are
not $E$-groups and so $|G|=p^6$. If $p=2$, then  one can see
(e.g., by {\sf GAP} \cite{g}) that there exist ten
$p\mathcal{E}$-groups $T$. We have checked by the package {\tt
AutPGrp} in {\sf GAP} \cite{g}, that for each such a group $T$,
there are  $\alpha\in Aut(T)$ and $x\in T$ such that
$[x,x^{\alpha}]\not=1$ (the automorphism $\alpha$ is in a set of
generators given by the package for $Aut(T)$); thus they are not
$E$-groups. Therefore $p$ is odd. Similar by the proof of Theorem
\ref{p^7-ep}, we have $d(G)=3$ (since $G$ has no non-trivial
abelian direct factor). Then by Theorem \ref{3e-gen}, we have $
\text{exp}(G)=p^2$. Hence by Corollary \ref{exp}, the proof is
complete.
\hfill$\Box$\\

\noindent{\bf Proof of Theorem \ref{thmp^7}.} $i)$ Suppose, for a
contradiction, that $G$ is a non-abelian $pE$-group of least order
subject to the property  $|G|\leq p^7$.   Then by Theorem
\ref{thmp^6}, $|G|=p^7$. By the choice of $G$ and that every
direct factor of an $E$-group is again an $E$-group, $G$ has no
abelian direct factor. Now Theorem \ref{p^7-ep} completes the proof.\\
\noindent
 $ii)$ Let
 \begin{align*} G=\langle x,y,z,t \;|\;
 y^2=z^2,[x,y]=[x,t]=y^2,[x,z]=z^2t^2,  [y,z]=x^2,[y,t]=[z,t]=[y,z,t]=1\rangle.
 \end{align*}
We have $G'=Z(G)=G^2=\Omega_1(G)=\{g^2 \;|\; g\in G\},
\text{exp}(G)=4$ and $|G|=2^7$. By Package {\tt AutPGrp} of {\sf
GAP} \cite{g}, $Aut(G)$ is abelian and so every automorphism of
$G$ is central. Then for all $\beta\in
Aut(G)$ and all $a\in G$, $[a,a^\beta]=1$.\\
Now we prove that every  endomorphism $\alpha$ of $G$ which is not
an automorphism,  maps $G$ into $Z(G)$. We denote by  $Ker\alpha$
and $Im\alpha$ the kernel and image of $\alpha$, respectively. We
first show that if $y^2\in Ker\alpha$, then $Im\alpha\leq Z(G)$.
Since $(y^\alpha)^2=(z^\alpha)^2=1$, we have $y^\alpha$ and
$z^\alpha\in Z(G)$. Then $(x^2)^\alpha=[y^\alpha,z^\alpha]=1$ and
so $x^\alpha\in Z(G)$. Also
$(t^2)^\alpha=(z^2t^2)^\alpha=[x^\alpha,z^\alpha]=1$ which
implies that  $t^\alpha\in Z(G)$. Therefore in this case,
$Im\alpha\leq Z(G)$.
Thus we can assume $y^2\notin Ker\alpha$.\\
Since $\alpha\notin Aut(G)$,  $Ker\alpha \cap Z(G) \not=1$. Now it
follows from the equality $G'=\Omega_1(G)=Z(G)=\{g^2 \;|\; g\in
G\}$ that there exists an element $g\in G$ such that $1\neq g^2\in
Ker\alpha$. Then $g^\alpha\in G'$ and $g\notin G'$. Thus
$g=x^iy^jz^kt^lw$, where $0\leq i,j,k,l\leq 1, w\in G'$ and at
least one of the integers $i,j,k,l$ is nonzero. For all $a\in G$,
$[a,g]^\alpha=[a^\alpha,g^\alpha]=1$ and so $[a,g]\in Ker\alpha$.
Since $[t,g]\in Ker\alpha$, $y^{2i}\in Ker\alpha$ and so $i=0$.
Also $[y,g]\in Ker\alpha$, implies that $x^{2k}\in Ker\alpha$. If
$k=1$ then $(x^2)^\alpha=1$ and so $x^\alpha\in Z(G)$. Therefore
$(y^2)^\alpha=[x^\alpha,t^\alpha]=1$ which is impossible.
Therefore $k=0$. Since $[z,g]\in Ker\alpha$ and $[x,g]\in
Ker\alpha$, we have $j=l=0$ which is a contradiction
and the proof is complete.\\
By a similar proof one can see that the following groups are
non-abelian $2E$-groups of order $2^7$.
\begin{align*}
\langle x,y,z,t \;|\;
 y^2=z^2t^2,[x,y]=[x,t]=z^2, [x,z]=t^2,[y,z]=x^2,
 [y,t]=[z,t]=[y,z,t]=1 \rangle.
\end{align*}
\begin{align*}
\langle x,y,z,t \;|\; y^2=z^2,[x,z]=y^2,[x,t]=t^2y^2,[x,y]=t^2,
[y,z]=x^2t^2,[y,t]=t^2,  [z,t]=[y,z,t]=1 \rangle.
\end{align*}
These groups are not isomorphic and their automorphism groups are
abelian and every endomorphism which is not an automorphism maps
the group into  its center. \hfill$\Box$\\

\noindent{\bf Acknowledgment.} The authors are grateful  to the
referee for his/her valuable comments which caused the paper to
be shorter.
\end{document}